\DeclareMathOperator{\rank}{rank}
\DeclareMathOperator{\vol}{vol}
\title{Sums and differences of four $k$-th powers}
\author{Oscar Marmon}
\date{}
\subjclass[2000]{Primary 11D85; Secondary 14G05}
\keywords{Sum of k-th powers; determinant method; diagonal form; integral points}
\address{Mathematical Sciences \\ Chalmers University of Technology \\ SE-412 96 Gothenburg}
\address{Mathematical Sciences \\ University of Gothenburg \\ SE-412 96 Goth{-}enburg}
\email{marmon@chalmers.se}
\begin{document}

\newcommand{\epsi}{\varepsilon}
\newcommand{\xx}{\mathbf{x}}
\newcommand{\xxi}{\boldsymbol{\xi}}
\newcommand{\eeta}{\boldsymbol{\eta}}
\newcommand{\0}{\boldsymbol{0}}
\newcommand{\bb}{\mathbf{b}}
\newcommand{\uu}{\mathbf{u}}
\newcommand{\ee}{\mathbf{e}}
\newcommand{\vv}{\mathbf{v}}
\newcommand{\yy}{\mathbf{y}}
\newcommand{\zz}{\mathbf{z}}
\newcommand{\ww}{\mathbf{w}}
\newcommand{\ff}{\mathbf{f}}
\newcommand{\cc}{\mathbf{c}}
\newcommand{\dd}{\mathbf{d}}
\newcommand{\hh}{\mathbf{h}}
\newcommand{\ttt}{\mathbf{t}}
\renewcommand{\aa}{\mathbf{a}}
\newcommand{\bK}{\mathbf{K}}
\newcommand{\bB}{\mathbf{B}}
\newcommand{\ZZ}{\mathbb{Z}}
\newcommand{\Zpol}{\ZZ[X_1,\ldots,X_n]}
\newcommand{\FF}{\mathbb{F}}
\newcommand{\RR}{\mathbb{R}}
\newcommand{\NN}{\mathbb{N}}
\newcommand{\CC}{\mathbb{C}}
\renewcommand{\AA}{\mathbb{A}}
\newcommand{\PP}{\mathbb{P}}
\newcommand{\GG}{\mathbb{G}}
\newcommand{\QQ}{\mathbb{Q}}
\newcommand{\sB}{\mathsf{B}}
\newcommand{\cP}{\mathcal{P}}
\newcommand{\cR}{\mathcal{R}}
\newcommand{\cB}{\mathcal{B}}
\newcommand{\cE}{\mathcal{E}}
\newcommand{\cC}{\mathcal{C}}
\newcommand{\cN}{\mathcal{N}}
\newcommand{\cM}{\mathcal{M}}
\newcommand{\cL}{\mathcal{L}}
\newcommand{\cD}{\mathcal{D}}
\newcommand{\cA}{\mathcal{A}}
\newcommand{\cK}{\mathcal{K}}
\newcommand{\cF}{\mathcal{F}}
\newcommand{\cZ}{\mathcal{Z}}
\newcommand{\cX}{\mathcal{X}}
\newcommand{\cG}{\mathcal{G}}
\newcommand{\cH}{\mathcal{H}}
\newcommand{\cI}{\mathcal{I}}
\newcommand{\cO}{\mathcal{O}}
\newcommand{\cW}{\mathcal{W}}
\newcommand{\ud}{\mathrm{ud}}
\newcommand{\Zar}{\mathrm{Zar}}
\newcommand{\fm}{\mathfrak{m}}
\newcommand{\fp}{\mathfrak{p}}
\newcommand{\smod}[1]{\,(#1)}
\newcommand{\spmod}[1]{\,(\mathrm{mod}\,{#1})}

\newtheorem{thm}{Theorem}[section]
\newtheorem{lemma}[thm]{Lemma}
\newtheorem*{lemma*}{Lemma}
\newtheorem{prop}[thm]{Proposition}
\newtheorem*{prop*}{Proposition}
\newtheorem*{thm*}{Theorem}
\newtheorem{claim}[thm]{Claim}
\newtheorem{cor}[thm]{Corollary}
\newtheorem*{conj*}{Conjecture}
\newtheorem{conj}{Conjecture}
\newtheorem*{sats*}{Sats}
\theoremstyle{remark}
\newtheorem*{note*}{Note}
\newtheorem{note}{Note}
\newtheorem*{rem*}{Remark}
\newtheorem{rem}[thm]{Remark}
\newtheorem{example}[thm]{Example}
\newtheorem*{acknowledgement*}{Acknowledgement}
\newtheorem*{question*}{Question}
\newtheorem*{answer*}{Answer}
\theoremstyle{definition}
\newtheorem*{def*}{Definition}
\newtheorem{notation}[thm]{Notation}
\newtheorem*{notation*}{Notation}

\begin{abstract}
We prove an upper bound for the number of representations of a positive integer $N$ as the sum of four $k$-th powers of integers of size at most $B$, using a new version of the determinant method developed by Heath-Brown, along with recent results by Salberger on the density of integral points on affine surfaces.

More generally we consider representations by any integral diagonal form. The upper bound has the form $O_{N}(B^{c/\sqrt{k}})$, whereas earlier versions of the determinant method would produce an exponent for $B$ of order $k^{-1/3}$ (uniformly in $N$) in this case.

Furthermore, we prove that the number of representations of a positive integer $N$ as a sum of four $k$-th powers of non-negative integers is at most $O_{\varepsilon}(N^{1/k+2/k^{3/2}+\varepsilon})$ for $k \geq 3$, improving upon bounds by Wisdom.
\end{abstract}


\maketitle
\let\languagename\relax

\hyphenation{Chal-mers}

%
%

\section{Introduction}

In this paper, we shall study the number of representations of a positive integer $N$ using four $k$-th powers. We consider two different versions of this problem. The main part of the paper concerns solutions to the equation
\begin{equation}
x_1^k \pm x_2^k \pm x_3^k \pm x_4^k = N
\end{equation}
in integers $x_i$, positive or negative. Our treatment of this problem is inspired by a recent paper of Heath-Brown \cite{Heath-Brown09} where he studies the equation
\begin{equation}
\label{eq:threepowers}
x_1^k \pm x_2^k \pm x_3^k = N.
\end{equation}

More precisely, he estimates the number of integral solutions to \eqref{eq:threepowers}, with $\max |x_i| \leq B$, that are not trivial in the sense that $\pm x_i^k = N$ for some $i$. Assuming that $N \ll B$, Heath-Brown proves that there are $O_k(B^{10/k})$ such solutions for $k \geq 3$.

The method used by Heath-Brown is a new approach to the determinant method of Bombieri and Pila \cite{Bombieri-Pila}. Rather than counting integral points on the affine surface defined by \eqref{eq:threepowers}, an approach that would yield an exponent of order $1/\sqrt{k}$ (using the version of the determinant method developed in \cite{Heath-Brown02} and \cite{Heath-Brown06}), he studies rational points near the projective curve given by $x_1^k \pm x_2^k \pm x_3^k = 0$.

Our aim in this paper is to study the corresponding problem in four variables, using the approach of \cite{Heath-Brown09}. The method works for arbitrary non-singular diagonal forms, so we state our main result in that generality. Let $\aa=(a_1,a_2,a_3,a_4)$ be a quadruple of non-zero integers, $k\geq 3$ an integer, and $N$ a positive integer. Let $\cR(N,B)$ be the number of quadruples $(x_1,x_2,x_3,x_4) \in \ZZ^4$ satisfying
\begin{equation}
\label{eq:fourpowers}
a_1 x_1^k + a_2 x_2^k + a_3 x_3^k + a_4 x_4^k = N
\end{equation}
and $\max |x_i| \leq B$. We note that the trivial estimate $\cR(N,B) = O(B^{2+\epsi})$ can be deduced easily using known results for Thue equations (see Proposition \ref{prop:thue} below).

We call a solution $\xx$ to \eqref{eq:fourpowers} \emph{special} if either $a_i x_i^k = N$ for some index $i$ or  $a_i x_i^k + a_j x_j^k = N$ for some pair of indices $i,j$.
If $X \subset \AA^4$ denotes the hypersurface defined by \eqref{eq:fourpowers}, then these solutions are all contained in a proper subvariety of $X$, namely the union of all lines on $X$ (see Section \ref{sec:fermat}). We shall see (Proposition \ref{prop:special}) that the special solutions contribute at most $O_{k,\epsi}(B N^\epsi)$ to $\cR(N,B)$. Thus, let $\cR_0(N,B)$ be the number of non-special solutions to \eqref{eq:fourpowers} satisfying $\max \vert x_i \vert \leq B$. Then we shall prove the following estimate.


%





\begin{thm}
\label{thm:main}
For any $\epsi > 0$ we have
\begin{equation}
\label{eq:main}
\cR_0(N,B) \ll_{\aa,N,\epsi} B^{16/(3\sqrt{3k})+\epsi}(B^{2/\sqrt{k}} + B^{1/\sqrt{k}+6/(k+3)}).
\end{equation}
In particular,
$\cR(N,B) \ll_{\aa,N} B$
for $k \geq 27$.
\end{thm}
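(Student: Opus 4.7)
The plan is to reinterpret integer solutions of \eqref{eq:fourpowers} as rational points of bounded height lying close to the projective surface
$$V: a_1 x_1^k + a_2 x_2^k + a_3 x_3^k + a_4 x_4^k = 0$$
in $\PP^3$. Since $N$ is fixed while $B\to\infty$, every integer solution with $\max|x_i|\leq B$ lies within distance $O(NB^{1-k})$ of $V$, so after rescaling the problem reduces to counting non-special rational points on (or very near) $V$ of height $\ll B$. This is the same shift of viewpoint that Heath-Brown employed in \cite{Heath-Brown09} for the three-variable case, and the decisive difference here is that $V$ is now a smooth projective surface rather than a curve, which should produce an exponent of the form $c/\sqrt{k}$ in place of the $c/k^{1/3}$ one gets by applying the usual determinant method directly to the affine surface $X$.

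First, I would run the $p$-adic determinant method of Heath-Brown/Salberger on $V$. Choosing an auxiliary degree $D$, averaging over primes $p$ in a carefully chosen range, and using that $V$ is smooth of degree $k$ in $\PP^3$, this produces auxiliary forms $F_1,\ldots,F_M$, none identically vanishing on $V$, whose zero loci cover every non-special rational point of height $\leq B$ on $V$, with $M\ll_\epsi B^{16/(3\sqrt{3k})+\epsi}$. The constant $16/(3\sqrt{3})$ emerges from optimising $D$ against the Hilbert function of a smooth surface of degree $k$ in $\PP^3$, exactly as in Salberger's work on affine surfaces.

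Next, for each auxiliary form $F$ I would count the non-special integer solutions whose projective image lies on the curve $C=V\cap\{F=0\}$ of degree $kD$. The sum inside the parenthesis in \eqref{eq:main} corresponds to taking the minimum of two counting strategies. The first is to apply Salberger's bound for integral points on affine surfaces directly, contributing $B^{2/\sqrt{k}+\epsi}$ per auxiliary form. The second is to apply a second pass of the determinant method to the auxiliary curve $C$ itself, yielding a bound of the shape $B^{1/\sqrt{k}+6/(k+3)+\epsi}$, where the exponent $6/(k+3)$ is the optimal one coming from the Hilbert function of a curve of degree comparable to $k$. Multiplying by $M$ gives exactly \eqref{eq:main}, and a direct numerical check shows that both summands in the total exponent drop below $1$ precisely when $k\geq 27$.

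The main obstacle is making the determinant method interact cleanly with the lines contained in $V$. A line $\ell\subset V$ carries $\gg B$ rational points of height $\leq B$, so if an auxiliary curve $C$ contained a line of $V$ the estimates above would break down. This is exactly why the \emph{special} solutions must be excluded: they are, up to the discussion of Section \ref{sec:fermat}, precisely the integer points lying on lines of $V$. The delicate point is to combine the geometric classification of lines on diagonal Fermat-type surfaces with the auxiliary-form construction, so that the inevitable degenerations of $C$ either contribute negligibly or are removed by the non-special hypothesis. Handling this degeneration in parallel with the surface-level determinant method is the part of the argument I would expect to take the most care.
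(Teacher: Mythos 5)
Your overall two-stage architecture --- a determinant method exploiting the closeness of the points $(x_1:\dots:x_4)$ to the projective Fermat surface, producing $O(B^{16/(3\sqrt{3k})+\epsi})$ auxiliary forms, followed by a count on the loci they cut out --- is the same as the paper's. But there are genuine gaps in both stages. In the first stage, a $p$-adic determinant method ``on $V$'' is not the right tool: the solutions do not lie on $V$, only near it, and the gain over the exponent $3/k^{1/3}$ comes precisely from the \emph{Archimedean} smallness of $\xi = F(\xx)/x_4^k = N/x_4^k \ll B^{-k}$. The paper runs the real (Bombieri--Pila/Heath-Brown) version: it covers the good points by $O(M^2)$ small cubes, changes variables via an implicit-function lemma, and weights the monomials in $u_1,u_2,\xi$ by giving $\xi$ weight $\alpha \approx (3k/4)^{3/2}$; optimizing $\alpha$ is what produces $16/(3\sqrt{3k})$, with $\delta$ allowed to be large but bounded in terms of $k,\epsi$.

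In the second stage your geometry is off. An auxiliary form $A_i$ cuts out the affine \emph{surface} $\tilde Y = X\cap\{A_i=0\}\subset\AA^4$ (with $X:F=N$), not a curve $V\cap\{F=0\}\subset\PP^3$; the integer points lie on $\tilde Y$, not on $V$. Moreover the two terms in \eqref{eq:main} are not the minimum of two strategies but the \emph{sum} of two contributions from a single application of Theorem \ref{thm:Salberger7.2} to (a birational projection of) each geometrically integral component $Y$ of $\tilde Y$: $O(B^{2/\sqrt{k}+\epsi})$ sporadic points not on any of the produced curves, plus $O(B^{1/\sqrt{k}+\epsi})$ curves each contributing $O(B^{6/(k+3)+\epsi})$ points by Pila's bound \eqref{eq:pila}. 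Two essential inputs are missing from your sketch and cannot be waved away: (i) one needs the Noether--Lefschetz theorem to see that $\deg Y$ is divisible by $k$, hence $\geq k$, so that Salberger's $B^{2/\sqrt{d}}$ really is $B^{2/\sqrt{k}}$ (an irreducible component of small degree would ruin the bound); and (ii) one needs a classification of \emph{all curves} of degree $<(k+3)/6$ --- not merely lines --- on the Fermat threefold $-Nx_0^k+\sum a_ix_i^k=0$ in $\PP^4$ (Proposition \ref{prop:fermat}, built on Salberger's Theorems \ref{thm:fermat} and \ref{thm:salberger8.4}), showing they are all standard lines and hence consist of special solutions; only then does every surviving curve have degree $\geq (k+3)/6$ and yield the exponent $6/(k+3)$. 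Without (i) and (ii) the claimed bound does not follow.
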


\begin{rem*}
The exponent $16/(3\sqrt{3k})$ in Theorem \ref{thm:main} is to be compared with the exponent $3/k^{1/3}$ that could be obtained (uniformly in $N$) by applying the ``ordinary'' determinant method of Heath-Brown \cite[Thm. 15]{Heath-Brown06} in this case. Furthermore, we remark that the bound \eqref{eq:main} is non-trivial for $k \geq 8$.
\end{rem*}

%

The estimate in Theorem \ref{thm:main} is proven by combining the ideas from \cite{Heath-Brown09} with recent results by Salberger \cite{Salberger09} about the density of integral points on affine surfaces.

In Sections \ref{sec:parameterization} and \ref{sec:auxiliary} we adapt Heath-Brown's arguments to the four-variable case. As with other instances of the determinant method, the output is a number of auxiliary forms, allowing us to estimate $\cR(N,B)$ through counting integral points of bounded height on a number of affine algebraic surfaces. In doing this, we use results by Salberger, discussed in Section \ref{sec:fermat}, concerning the geometry of Fermat hypersurfaces. The proof of Theorem \ref{thm:main} is finished in Section \ref{sec:integralpoints}.

It is implicit in Theorem \ref{thm:main} that $N$ is fixed and small. If $N$ is allowed to grow as $B \to \infty$, we have the following more precise estimate.

\begin{thm}
\label{thm:mainbigN}
Suppose that $N = O(B^{k-\tau})$, where $4/3 < \tau < k$. Then we have
\begin{equation}
\label{eq:mainbigN}
\cR_0(N,B) \ll_{\aa,\tau,\epsi} B^{\frac{16}{3\sqrt{3k}} + \epsi} N^{\frac{24}{(3\tau)^{3/2}}-\frac{16}{(3k)^{3/2}}}\left(B^{2/\sqrt{k}} + B^{1/\sqrt{k}+6/(k+3)}\right)
\end{equation}
for any $\epsi > 0$.
\end{thm}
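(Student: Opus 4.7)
The plan is to revisit the proof of Theorem \ref{thm:main} while keeping the dependence on $N$ explicit, rather than absorbing it into the implicit constant as would be appropriate for bounded $N$. In the determinant-method approach of \cite{Heath-Brown09}, adapted here in Sections \ref{sec:parameterization} and \ref{sec:auxiliary}, integral solutions to \eqref{eq:fourpowers} are treated as rational points close to the projective Fermat surface $a_1 x_1^k + a_2 x_2^k + a_3 x_3^k + a_4 x_4^k = 0$, with proximity measured by a quantity of order $N/B^k$. When $N$ is fixed this proximity is negligible, but when $N$ grows as $B^{k-\tau}$ the effective scale of the problem shifts, and this shift must be tracked through the construction of the auxiliary forms and through the height parameters entering Salberger's bounds \cite{Salberger09} for integral points on the resulting auxiliary surfaces.

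The key manipulation is the rewriting $B^{16/(3\sqrt{3k})} = (B^k)^{16/(3k)^{3/2}}$, which reveals that this exponent is naturally attached to the height $B^k$ of the projective Fermat variety. When $N$ is allowed to grow, one expects this factor to be replaced by a product of the form $(B^k/N)^{16/(3k)^{3/2}} \cdot N^{c(\tau)}$, where the exponent $c(\tau)$ encodes the re-optimization of the degree parameter $D$ in the determinant method against the new effective height $N^{1/\tau}$. A direct calculation should give $c(\tau) = 24/(3\tau)^{3/2}$, yielding precisely the additional factor $N^{24/(3\tau)^{3/2} - 16/(3k)^{3/2}}$ in \eqref{eq:mainbigN}. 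The Salberger factor $B^{2/\sqrt{k}} + B^{1/\sqrt{k}+6/(k+3)}$ is unchanged because the degrees of the auxiliary surfaces do not depend on $N$, only their heights.

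The main technical obstacle is the bookkeeping needed to propagate $N$ uniformly through the two separate optimizations---over the auxiliary degree $D$ in the determinant method of Section \ref{sec:auxiliary}, and over Salberger's parameters in the integral-points count---while maintaining validity of all intermediate inequalities throughout the range $4/3 < \tau < k$. The lower bound $\tau > 4/3$ should emerge as the threshold below which the particular optimization chosen in Theorem \ref{thm:main} ceases to dominate, so that one would need to switch to alternative (trivial or Thue-type) estimates as in Proposition \ref{prop:thue}.
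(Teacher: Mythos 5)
Your overall strategy --- rerun the argument of Theorem \ref{thm:main} keeping the dependence on $N$ explicit, and observe that the Salberger factor $B^{2/\sqrt{k}}+B^{1/\sqrt{k}+6/(k+3)}$ is unaffected because only degrees, not heights, enter at that stage --- is exactly what the paper does: it proves a variant of Proposition \ref{prop:auxiliaryform} (namely Proposition \ref{prop:auxiliaryformbigN}) carrying the extra factor $N^{24/(3\tau)^{3/2}-16/(3k)^{3/2}}$, and then feeds it into the unchanged machinery of Section \ref{sec:integralpoints}. The difficulty is that the one step carrying all of the new content is left as an assertion: ``a direct calculation should give $c(\tau)=24/(3\tau)^{3/2}$.'' That calculation \emph{is} the theorem. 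Concretely one must: (i) fix $t\in[\tau,k)$ with $\tfrac12 B^{\gamma}<N\le B^{\gamma}$, $\gamma=k-t$, and choose $\alpha=(1-\lambda)(3t/4)^{3/2}$ --- the parameter is re-tuned to the actual size of $N$, i.e.\ to $t$, not to $\tau$ or to $k$; (ii) check that the leading coefficient of $\beta$ in \eqref{eq:beta} is then at most $\tfrac16\bigl(1-(1-\lambda)^{-2/3}\bigr)<0$ uniformly in $t$, so the determinant still vanishes for $\delta$ large; and (iii) bound the number of good cubes $M^2=M_0^{-2}N^{-2/\alpha}(B/2)^{2k/\alpha}$ by expanding $\alpha^{-1}=(4/(3k))^{3/2}(1-\lambda)^{-1}(1-\gamma/k)^{-3/2}$ and using $(1+\gamma/t)^{3/2}\le 1+\tfrac{3k^{1/2}}{2t^{3/2}}\gamma$ together with $\gamma\le\log(2N)/\log B$; this is where $B^{2k/\alpha}\ll B^{16/(3\sqrt{3k})+\epsi}N^{24/(3\tau)^{3/2}}$ and hence the exponent $24/(3\tau)^{3/2}-16/(3k)^{3/2}$ actually come from. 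Without (iii) you have not derived the exponent, only named it; and your heuristic identification of the determinant-method count with ``the height $B^k$ of the projective Fermat variety'' does not by itself produce the $\tau$-dependence, since the correct exponent of $N$ depends on re-choosing $\alpha$ as a function of $t$, not merely on replacing $B^k$ by $B^k/N$.

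A smaller but genuine error: the hypothesis $\tau>4/3$ is not ``the threshold below which this optimization ceases to dominate so that one switches to Thue-type estimates.'' It is needed so that $\alpha=(1-\lambda)(3t/4)^{3/2}>1$ can be arranged for all $t\ge\tau$, which is what guarantees the a priori requirement \eqref{eq:M_leq}, i.e.\ that $M\le (B/2)^kM_0^{-1}N^{-1}$, and hence that every solution in the dyadic range genuinely produces a good point lying in a good cube. If $\tau\le 4/3$ the parameter choice itself is inadmissible, not merely suboptimal.
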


Note that, as in \cite{Heath-Brown09}, the determinant method discussed in Sections \ref{sec:parameterization} and \ref{sec:auxiliary} applies to any non-singular form. It is only in the later steps of the proof of Theorem \ref{thm:main} that we specialize to the case of a diagonal form.

The second result of the paper concerns the number $R_k(N)$ of representations of a positive integer $N$ as a sum of four $k$-th powers
\begin{equation}
\label{eq:4kthpowers}
x_1^k + x_2^k + x_3^k + x_4^k = N,
\end{equation}
where $x_i$ are \emph{non-negative} integers and $k \geq 3$. One easily proves, for example using Proposition \ref{prop:thue} below, that $R_k(N) = O_\epsi(N^{2/k + \epsi})$. Hooley \cite{Hooley78} has studied sums of four cubes, and proved the remarkable estimate $R_3(N) = O_\epsi(N^{11/18 + \epsi})$. Wisdom \cite{Wisdom98,Wisdom99} extended Hooley's methods to prove that $R_k(N) =O_\epsi(N^{11/(6k)+\epsi})$ for odd integers $k\geq 3$. Our result is the following:

\begin{thm}
\label{thm:wisdom}
\[
R_k(N) \ll_{k,\epsi} N^{1/k + 2/k^{3/2} + \epsi}
\]
for any $\epsi > 0$.
\end{thm}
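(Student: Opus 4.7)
Proposal:

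For $k$ sufficiently large (depending on $\epsi$), I would deduce the theorem directly from Theorem \ref{thm:mainbigN} applied to the diagonal form $\aa=(1,1,1,1)$, taking $B = \lceil N^{1/(k-\tau)} \rceil$ for $\tau = (24k)^{2/3}/3$. Since non-negative solutions satisfy $x_i \leq N^{1/k} \leq B$, one has $R_k(N) \leq \cR(N,B)$, and the hypothesis $N = O(B^{k-\tau})$ is satisfied (with constant $1$).

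Substituting $B = N^{1/(k-\tau)}$ into Theorem \ref{thm:mainbigN} and using the algebraic identity $(16/(3\sqrt{3k}) + 2/\sqrt{k})/k - 16/(3k)^{3/2} = 2/k^{3/2}$, together with $1/(k-\tau) = 1/k + \tau/(k(k-\tau))$, the exponent of $N$ in the resulting bound can be rewritten as
\[
\frac{2}{k^{3/2}} \;+\; \frac{24}{(3\tau)^{3/2}} \;+\; \bigg(\frac{16}{3\sqrt{3k}} + \frac{2}{\sqrt{k}}\bigg)\frac{\tau}{k(k-\tau)}.
\]
With $\tau = (24k)^{2/3}/3$ the middle term equals exactly $1/k$, while the correction term is of order $k^{-11/6}$ and is absorbed into the final $\epsi$ for $k$ above some threshold $k_0(\epsi)$, yielding the target exponent $1/k + 2/k^{3/2} + \epsi$. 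Special solutions contribute at most $O_\epsi(BN^\epsi) = O_\epsi(N^{1/k+\epsi})$ by Proposition \ref{prop:special}, well within our bound.

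For $k \in \{3,4\}$, the trivial bound $R_k(N) \ll_\epsi N^{2/k+\epsi}$ already lies within the target since $2/k \leq 1/k + 2/k^{3/2}$ in this range. For the remaining intermediate values $5 \leq k < k_0(\epsi)$, one would supplement the argument: Wisdom's bound $N^{11/(6k)+\epsi}$ handles odd $k \leq 5$, and for the remaining finitely many $k$ a direct fix-one-variable reduction combined with the three-variable determinant method should suffice, using \cite{Heath-Brown09} in the ``tail'' regime where $x_4$ is close to $N^{1/k}$ and Salberger's affine-surface bounds \cite{Salberger09} for the bulk. The main obstacle is precisely this intermediate range of $k$, where Theorem \ref{thm:mainbigN} alone is not strong enough (because the constraint $\tau < k$ keeps the term $24/(3\tau)^{3/2}$ from being made small) and where the available three-variable estimates have to be carefully pieced together.
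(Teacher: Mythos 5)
Your main route---substituting $B=N^{1/(k-\tau)}$, $\tau=(24k)^{2/3}/3$ into Theorem \ref{thm:mainbigN}---is arithmetically sound: the identity $(3\tau)^{3/2}=24k$ and the cancellation of the $16/(3k)^{3/2}$ terms are correct, and you do obtain the exponent $1/k+2/k^{3/2}+O(k^{-11/6})$. But this leaves a genuine gap. The hypothesis $\tau<k$ already fails for $k\le 21$. More importantly, for each \emph{fixed} $k$ the correction terms are fixed positive numbers, not quantities tending to $0$ with $N$: the $O(k^{-11/6})$ term; the contribution $\bigl(1/\sqrt{k}+6/(k+3)\bigr)/(k-\tau)$ of the second factor, whose excess over $2/k^{3/2}$ is positive for $k\le 35$ or so; and the special solutions, which Proposition \ref{prop:special} bounds by $O(BN^{\epsi})=O(N^{1/(k-\tau)+\epsi})$, not $O(N^{1/k+\epsi})$ as you wrote---an excess of order $k^{-4/3}$, which is \emph{larger} than $2/k^{3/2}$ (this last point is repairable, since for non-negative solutions a special solution forces two variables to vanish and one gets $O(N^{\epsi})$ outright). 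All of these are $\le\epsi$ only for $k\ge k_0(\epsi)$, and $k_0(\epsi)\to\infty$ as $\epsi\to 0$. Since the theorem is asserted for every $k\ge 3$, the range $6\le k<k_0(\epsi)$ (your trivial bound covers $k=3,4$ and Wisdom covers $k=5$) remains unproved, and your treatment of it is a one-sentence sketch, not an argument.

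The irony is that the sketch you relegate to the intermediate range is the paper's entire proof, and it works uniformly for all $k\ge 3$ with no case distinctions and no appeal to Theorem \ref{thm:mainbigN} at all. Fix $x_4=a$ for each of the at most $N^{1/k}+1$ admissible values; each slice is $x_1^k+x_2^k+x_3^k=N-a^k$ with $0\le x_i\le N^{1/k}$, and Theorem \ref{thm:threepowers} (itself proved via Salberger's Theorem \ref{thm:Salberger7.2} and the classification of low-degree curves on Fermat surfaces) bounds the solutions of each slice with $x_i^k\ne N-a^k$ by $O_{k,\epsi}(N^{2/k^{3/2}+\epsi})$, while the excluded solutions force the remaining two variables to vanish and contribute $O(1)$ per slice. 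Summing over $a$ gives $R_k(N)\ll_{k,\epsi}N^{1/k+2/k^{3/2}+\epsi}$ at once (and, with $x_4^k$ replaced by $x_4^{\ell}$, the more general Theorem \ref{thm:kl}). You should run this slicing argument directly and drop the detour through Theorem \ref{thm:mainbigN}, whose error terms are simply not sharp enough in the regime $B\asymp N^{1/k}$ to deliver the stated exponent for every $k$.
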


This estimate is non-trivial for $k\geq 5$, and sharper than Wisdom's for $k>5$. Theorem \ref{thm:wisdom} is proven in Section \ref{sec:wisdom}, as an easy corollary of the next result.
The estimate in Theorem \ref{thm:threepowers} was mentioned already in \cite{Heath-Brown09}, and is in principle contained in Salberger's work \cite{Salberger09}, but we shall give a proof in Section \ref{sec:wisdom} for the sake of completeness. 

\begin{thm}
\label{thm:threepowers}
Let $a_1,a_2,a_3,M$ be non-zero integers. Let $r_0(M,B)$ be the number of solutions $(x_1,x_2,x_3) \in \ZZ^3$ to the equation
\[
a_1 x_1^k + a_2 x_2^k + a_3 x_3^k = M
\]
satisfying $|x_i| \leq B$ and $a_i x_i^k \neq M$ for $i=1,2,3$. Then
\[
r_0(M,B) = O_{k,\epsi}(B^{2/\sqrt{k} + \epsi}).
\]
\end{thm}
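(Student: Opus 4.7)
The plan is to view non-special solutions as integral points of bounded height on the affine surface $X \subset \AA^3$ cut out by $a_1 x_1^k + a_2 x_2^k + a_3 x_3^k = M$, and then invoke Salberger's uniform bound \cite{Salberger09} on the density of integral points on a geometrically integral affine surface of degree $d$, which produces the target exponent $2/\sqrt{d}$. In our case $d=k$ and the projective completion $\bar X \subset \PP^3$, defined by $a_1 x_1^k + a_2 x_2^k + a_3 x_3^k - M x_4^k = 0$, is a smooth Fermat-type surface (since no coefficient vanishes), hence geometrically integral. Each non-special integer solution with $|x_i|\leq B$ furnishes a rational point of height at most $B$ on $\bar X$, so after applying Salberger's theorem it will suffice to control the contribution of the lines on $\bar X$.

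The main content is to verify that every integer point lying on a line contained in $\bar X$ is automatically a special solution, so that non-special solutions lie off the exceptional locus of Salberger's estimate. For this I would appeal to the classical description of the $3k^2$ lines on a smooth Fermat surface in $\PP^3$: they fall into three families indexed by the pair partitions of the index set $\{1,2,3,4\}$, and for a partition $\{i,j\}\{l,m\}$ a line has the form $x_i = \alpha x_j$, $x_l = \beta x_m$, where $\alpha,\beta$ are $k$-th roots of suitable ratios of the coefficients $(a_1,a_2,a_3,-M)$. Since the coefficient attached to $x_4^k$ is $-M$, whichever pair contains the index $4$ forces a relation of the shape $a_t \gamma^k = M$ for some $t\in\{1,2,3\}$ and $\gamma\in\{\alpha,\beta\}$. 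In the affine chart $x_4=1$ this means $x_t \equiv \gamma$ along the line, so any lattice point on the line satisfies $a_t x_t^k = M$ and is excluded from $r_0(M,B)$. Lines on $\bar X$ that lie in the hyperplane at infinity $\{x_4=0\}$ would have to be contained in the smooth curve $a_1 x_1^k + a_2 x_2^k + a_3 x_3^k = 0$ in $\PP^2$, which is impossible for $k \geq 3$ since this curve has positive genus; so there is nothing further to handle at infinity.

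Putting the two steps together, every non-special integer solution of size at most $B$ corresponds to a rational point on $\bar X$ of height at most $B$ lying off every line of $\bar X$, and Salberger's bound then yields the desired estimate $O_{k,\epsi}(B^{2/\sqrt{k}+\epsi})$. The counting argument itself is essentially routine once the geometric input is in place; the delicate step is the bookkeeping needed to match the exceptional locus arising in Salberger's theorem with the union of lines on $\bar X$, and then to identify the affine portion of those lines with the special solutions. I expect that this identification, together with referencing the precise form of Salberger's estimate in which lines are the only curves contributing to the exceptional locus, constitutes the only real obstacle.
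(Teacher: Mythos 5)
Your starting point (Salberger's affine determinant method applied to the surface $a_1x_1^k+a_2x_2^k+a_3x_3^k=M$) is the same as the paper's, and your verification that every integral point on a line of the projective closure is a special solution is correct: the lines on the smooth Fermat surface come from pair partitions of the indices, and whichever pair contains the index carrying the coefficient $-M$ forces $a_tx_t^k=M$ in the affine chart. But there is a genuine gap in the middle step. The version of Salberger's estimate you are hoping to cite, ``in which lines are the only curves contributing to the exceptional locus,'' does not exist. What Salberger's Theorem 7.2 of \cite{Salberger09} actually gives is a collection of $O_{k,\epsi}(B^{1/\sqrt{k}+\epsi})$ geometrically integral curves of degree $O_k(1)$ --- not necessarily lines --- such that only $O_{k,\epsi}(B^{2/\sqrt{k}+\epsi})$ integral points lie off their union. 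After applying it you must still bound the integral points lying \emph{on} those curves, and this is where the substance of the proof lies.

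The paper handles the exceptional curves with three further ingredients that your proposal omits: (i) Salberger's Theorem 8.4 of \cite{Salberger09}, which says that any integral curve of degree less than $(k+1)/3$ on a Fermat surface in $\PP^3$ is a standard line, so only the lines (which you have correctly excluded) occur in very low degree; (ii) a theorem of Colliot-Th\'el\`ene \cite{Colliot-Thelene02} guaranteeing that a smooth surface of degree $k$ in $\PP^3$ carries only $O_k(1)$ geometrically integral curves of degree at most $k-2$; and (iii) Pila's bound $\cN(C,B)\ll_{d,\epsi}B^{1/d+\epsi}$ for an irreducible affine curve of degree $d$. With these, the $O_k(1)$ non-line curves of degree between $(k+1)/3$ and $k-2$ contribute $O(B^{3/(k+1)+\epsi})$, and the $O(B^{1/\sqrt{k}+\epsi})$ curves of degree at least $k-1$ contribute $O(B^{1/\sqrt{k}+1/(k-1)+\epsi})$; both are $O(B^{2/\sqrt{k}+\epsi})$ for $k\geq 3$. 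Ingredient (ii) is not optional: without it, $B^{1/\sqrt{k}+\epsi}$ curves of degree near $(k+1)/3$ could each carry $B^{3/(k+1)+\epsi}$ points, and $1/\sqrt{k}+3/(k+1)>2/\sqrt{k}$ for $k\leq 6$. As written, your argument establishes the claim only for the points off the exceptional curves and for the points on the lines, so the stated bound does not yet follow.
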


In fact, with no extra work, the proof of Theorem \ref{thm:wisdom} yields the following more general result.

\begin{thm}
\label{thm:kl}
Let $k,\ell \geq 3$. Let $R_{k,\ell}(N)$ be the number of solutions to the equation
\begin{equation}
\label{eq:lth-and-kthpowers}
x_1^k + x_2^k + x_3^k + x_4^\ell = N
\end{equation}
in non-negative integers $x_i$. Then we have the estimate
\[
R_{k,\ell}(N) \ll_{k,\epsi} N^{1/\ell + 2/k^{3/2} + \epsi}
\]
for any $\epsi >0$.
\end{thm}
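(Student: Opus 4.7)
The plan is to derive Theorem \ref{thm:kl} as a direct consequence of Theorem \ref{thm:threepowers} applied to the equation in the first three variables, after summing over the fourth.

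First I would fix $x_4 \in \{0,1,\ldots,\lfloor N^{1/\ell}\rfloor\}$, giving $\ll N^{1/\ell}$ possibilities. For each such choice, write $M = M(x_4) = N - x_4^\ell$, so that the remaining triple $(x_1,x_2,x_3)$ must satisfy
\[
x_1^k + x_2^k + x_3^k = M, \qquad 0 \le x_i \le N^{1/k}.
\]
If $M = 0$, then necessarily $x_1 = x_2 = x_3 = 0$, contributing at most $O(1)$ solutions per such $x_4$, hence $O(N^{1/\ell})$ in total.

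If $M \neq 0$, I would invoke Theorem \ref{thm:threepowers} with $a_1=a_2=a_3=1$ and $B = N^{1/k}$. The non-special triples are bounded by $O_{k,\epsi}(B^{2/\sqrt{k}+\epsi}) = O_{k,\epsi}(N^{2/k^{3/2} + \epsi})$ after adjusting $\epsi$. For the special triples (those with $x_i^k = M$ for some $i$), the value of $x_i$ is forced to be the unique non-negative $k$-th root of $M$, and the other two variables must then satisfy $x_j^k + x_l^k = 0$, forcing $x_j=x_l=0$; this contributes only $O(1)$ triples per value of $x_4$, and therefore $O(N^{1/\ell})$ in total.

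Summing over $x_4$ yields
\[
R_{k,\ell}(N) \ll_{k,\epsi} N^{1/\ell} \cdot N^{2/k^{3/2} + \epsi} + N^{1/\ell} \ll_{k,\epsi} N^{1/\ell + 2/k^{3/2} + \epsi},
\]
which is the claimed bound. The only subtleties are the trivial verifications that the boundary case $M=0$ and the special solutions are both absorbed, so there is no real obstacle — the theorem is essentially a packaging of Theorem \ref{thm:threepowers} with one extra variable of size $N^{1/\ell}$ rather than $N^{1/k}$.
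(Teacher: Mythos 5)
Your argument is correct and follows essentially the same route as the paper: slice by the value of $x_4$, apply Theorem \ref{thm:threepowers} with $B = N^{1/k}$ to each resulting equation $x_1^k+x_2^k+x_3^k = N - x_4^\ell$, and sum the $O_{k,\epsi}(N^{2/k^{3/2}+\epsi})$ bound over the $O(N^{1/\ell})$ slices. Your explicit handling of the $M=0$ slices and of the special triples with $x_i^k=M$ (which the paper absorbs more tersely by noting that only non-negative solutions are being counted) is a welcome bit of extra care, but does not change the argument.
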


The corresponding trivial estimate is $N^{1/\ell + 1/k + \epsi}$. We also note that Wisdom \cite{Wisdom00} has proved that
\[
R_{3,4}(N)  = O_\epsi(N^{5/9+\epsi}) \text{ and } R_{3,5}(N) = O_\epsi(N^{47/90+\epsi}),
\]
bounds which are sharper than the ones given by Theorem \ref{thm:kl}.

\begin{notation*}
The following notation shall be used.
If $U \subseteq \AA^n$ is a locally closed subset, let $U(\ZZ)$ be the set of integral points in $U$. Then we define
\[
U(\ZZ,B) = U(\ZZ) \cap [-B,B]^n
\]
for any positive real number $B$, and
\[
\cN(U,B) = \#U(\ZZ,B).
\]
We shall also use the notation
\[
\cN_+(U,B) = \#(U(\ZZ)\cap [0,B]^n).
\]

Finally, we adopt the following convention for the $O$- and $\ll$-notation. The implied constants are allowed to depend upon the coefficients of the polynomial $F$ under consideration (that is, on the $a_i$, in the case of a diagonal form) unless we indicate uniformity through the subscript $k$.
\end{notation*}

\begin{acknowledgement*}
This paper is a part of my doctoral thesis at Chalmers University of Technology. I thank my supervisor Per Salberger for suggesting the main topic of the paper, and for many helpful comments. I am also grateful to Tim Browning, who suggested that a result along the lines of Theorem \ref{thm:wisdom} could be obtained. Finally, I thank the organizers of the trimester program \emph{Diophantine Equations} at HIM in Bonn 2009, during which part of this work was done.
\end{acknowledgement*}


\section{Parameterization of points near projective surfaces}
\label{sec:parameterization}

In this section we generalize, in a completely straightforward fashion, some preparatory results in Heath-Brown's paper \cite{Heath-Brown09}. 
The proof of Lemma 1 in \cite{Heath-Brown09} generalizes readily to $\RR^4$, to yield our first lemma.

\begin{lemma}
\label{lem:goodcubes}
Let $F(x_1,x_2,x_3,x_4)$ be a non-singular homogeneous polynomial of degree $k$. There is a natural number $M_0$, depending only on $F$, with the following property: if the unit cube $[-1,1]^{3}$ is partitioned into smaller cubes
\[
[a_1,a_1+(M_0 M)^{-1}]\times [a_2,a_2+(M_0 M)^{-1}] \times [a_{3},a_{3}+(M_0 M)^{-1}],
\]
for some positive integer $M$, then the number of such cubes containing a solution $(t_1,t_2,t_{3})\in \RR^{3}$ to the inequality
\begin{equation}
\label{eq:F_ineq}
|F(t_1,t_2,t_{3},1)| \leq \frac{1}{M_0 M}
\end{equation}
is at most $O(M^{2})$. Moreover, if $S$ is such a cube containing a solution to \eqref{eq:F_ineq}, then for some index $i$ we have
\[
\left\vert \frac{\partial F}{\partial x_i}(a_1,a_2,a_3,1) \right\vert \gg 1.
\]
\end{lemma}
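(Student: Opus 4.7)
The plan is to mimic Heath-Brown's proof of Lemma 1 in \cite{Heath-Brown09}, with the same conceptual strategy: show that the closed set
\[
S_\delta := \{(t_1,t_2,t_3)\in[-1,1]^3 : |F(t_1,t_2,t_3,1)| \leq \delta\}
\]
is a thin tube around the real affine surface $F(t,1) = 0$, of volume $O(\delta)$. Once this is established with $\delta = (M_0 M)^{-1}$, the number of cubes of side $\delta$ meeting $S_\delta$ is bounded by $\vol(S_\delta + [0,\delta]^3)/\delta^3 \ll \delta/\delta^3 = O(M^2)$.

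The crucial analytic input is a uniform lower bound on $|\nabla_{(t_1,t_2,t_3)} F(t_1,t_2,t_3,1)|$ throughout $S_\delta$ for $\delta$ sufficiently small. First I would establish this on the real zero locus $Z := \{F(t,1)=0\}\cap[-1,1]^3$: since $F$ is non-singular as a projective form, if the three affine partial derivatives $\partial_i F(t,1)$ ($i=1,2,3$) all vanished at a point of $Z$, then Euler's identity $\sum_{i=1}^{3}t_i\partial_i F(t,1) + \partial_4 F(t,1) = kF(t,1) = 0$ would force $\partial_4 F(t,1) = 0$ as well, contradicting the non-singularity of the projective hypersurface $F = 0$. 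Since $Z$ is compact, continuity gives a uniform positive lower bound $c_F > 0$ on $\max_i |\partial_i F(t,1)|$ over $Z$, and then a simple compactness/continuity argument extends this bound (with a slightly smaller constant) to $S_\delta$ for all $\delta \leq \delta_0$, where $\delta_0$ depends only on $F$. I then choose $M_0$ large enough that $(M_0 M)^{-1} \leq \delta_0$ for every $M \geq 1$.

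With the gradient bound in place, I would partition $S_\delta$ into three pieces $S_\delta^{(i)}$ according to which $|\partial_i F(t,1)|$ is largest. On each $S_\delta^{(i)}$, the function $t_i \mapsto F(t,1)$ is strictly monotone in $t_i$ with derivative $\gg 1$, so every fibre over the projection to the other two coordinates is an interval of length $O(\delta)$. Projecting onto the unit square $[-1,1]^2$ in the remaining coordinates therefore shows $\vol(S_\delta^{(i)}) \ll \delta$, hence $\vol(S_\delta) \ll \delta$. A cube of side $\delta$ meeting $S_\delta$ is contained in the $(\sqrt{3}\delta)$-neighbourhood of $S_\delta$, which by the same argument has volume $\ll \delta$; dividing by $\delta^3$ yields the desired $O(M^2)$ bound.

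For the final statement, suppose $S$ is a partition cube with corner $\aa=(a_1,a_2,a_3)$ that contains a point $(t_1,t_2,t_3) \in S_\delta$. By the gradient bound just established, there is some $i\in\{1,2,3\}$ with $|\partial_i F(t_1,t_2,t_3,1)| \geq c_F/2$, and since $F$ is a fixed polynomial, its partial derivatives are Lipschitz on $[-1,1]^3$, so $|\partial_i F(a_1,a_2,a_3,1) - \partial_i F(t_1,t_2,t_3,1)| = O(\delta)$; enlarging $M_0$ once more if necessary guarantees $|\partial_i F(\aa,1)| \geq c_F/4 \gg 1$. The only conceptual obstacle is the gradient non-vanishing step; everything else is routine measure-theoretic bookkeeping that parallels the $\RR^3$ argument in \cite{Heath-Brown09}.
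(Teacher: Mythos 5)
Your argument is correct and is essentially the approach the paper relies on: the paper gives no separate proof of this lemma, stating only that Heath-Brown's proof of his Lemma~1 (the gradient lower bound on the zero locus via Euler's identity, non-singularity and compactness, followed by a tube-volume/covering count) generalizes readily to one more variable, which is exactly what you have carried out. The only cosmetic point is that a fibre of $S_\delta^{(i)}$ in the $t_i$-direction need not be a single interval, but it is a union of $O_k(1)$ intervals, on each of which the monotonicity argument gives length $O(\delta)$, so the volume bound $\vol(S_\delta)\ll\delta$ and the resulting $O(M^2)$ count stand.
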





Following Heath-Brown, let us call such a cube $S$ a ``good'' cube. Let us also call a solution $\ttt = (t_1,t_2,t_3)$ to \eqref{eq:F_ineq} a ``good'' point.
For a good cube, we can prove the following result. Again, the proof is an easy generalization of that of \cite[Lemma 2]{Heath-Brown09}.

\begin{lemma}
\label{lem:implicit}
Retaining the notation of the previous lemma, suppose that
\[
S=[a_1,a_1+(M_0 M)^{-1}]\times [a_2,a_2+(M_0 M)^{-1}] \times [a_3,a_3+(M_0 M)^{-1}]\]
is a good cube, and that $\left\vert \frac{\partial F}{\partial x_3}(a_1,a_2,a_3,1)\right\vert \gg 1$.

If $(t_1,t_2,t_3) = (a_1+u_1,a_2+u_2,a_3+u_3) \in S$, put
\[
w=F(t_1,t_2,t_3,1)-F(a_1,a_2,a_3,1).
\]
Then there exist, for each $m \in \NN$, two polynomials
\[
\Phi_m(u_1,u_2,w), \quad  \Psi_m(u_1,u_2,u_3,w),
\]
such that $\Phi_m$ has no constant term and $\Psi_m$ has no term of degree less than $m$, and such that the relation
\[
u_3 = \Phi_m(u_1,u_2,w) + u_3 \Psi_m(u_1,u_2,u_3,w)
\]
holds throughout $S$. Moreover, $\Phi_m$ and $\Psi_m$ have degree $O_{m}(1)$ and coefficients of size $O_m(1)$.
\end{lemma}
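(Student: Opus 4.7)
The approach is to construct $\Phi_m$ and $\Psi_m$ by induction on $m$, starting with an explicit base case and then doubling the valuation of $\Psi_m$ at each step. I would first Taylor expand
\[
w = F(a_1+u_1,a_2+u_2,a_3+u_3,1) - F(a_1,a_2,a_3,1) = F_1 u_1 + F_2 u_2 + F_3 u_3 + R(u_1,u_2,u_3),
\]
where $F_i = \frac{\partial F}{\partial x_i}(a_1,a_2,a_3,1)$ and $R$ is a polynomial of total degree between $2$ and $k$. Since $|F_3| \gg 1$, $F_3^{-1}$ exists and is bounded. Decomposing $R = R(u_1,u_2,0) + u_3 \tilde R(u_1,u_2,u_3)$, where $\tilde R$ has valuation at least $1$, I would set
\[
\Phi_1 := F_3^{-1}\bigl(w - F_1 u_1 - F_2 u_2 - R(u_1,u_2,0)\bigr), \quad \Psi_1 := -F_3^{-1}\tilde R(u_1,u_2,u_3).
\]
A direct substitution verifies $\Phi_1 + u_3\Psi_1 = F_3^{-1}(w - F_1u_1 - F_2u_2 - R) = u_3$ on $S$, while $\Phi_1$ has no constant term and $\Psi_1$ has no term of degree less than $1$.

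For the inductive step, assume $u_3 = \Phi_m + u_3\Psi_m$ holds on $S$ with $\Psi_m$ of valuation at least $m$. I would first substitute the identity into the third argument of $\Psi_m$ and expand via the Taylor-type identity
\[
\Psi_m(u_1,u_2,\Phi_m + u_3\Psi_m,w) = \Psi_m(u_1,u_2,\Phi_m,w) + u_3\Psi_m \cdot \tilde\Psi_m(u_1,u_2,u_3,w)
\]
for a suitable polynomial $\tilde\Psi_m$ arising from the Taylor remainder. Crucially, $A_m(u_1,u_2,w) := \Psi_m(u_1,u_2,\Phi_m(u_1,u_2,w),w)$ involves only $u_1,u_2,w$, and substituting $\Phi_m$ (valuation $\geq 1$) for $u_3$ in $\Psi_m$ preserves valuation, so $A_m$ has valuation at least $m$. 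A similar bookkeeping gives $\tilde\Psi_m$ valuation at least $m-1$. Applying the identity once more to replace the outer $u_3$ in the resulting term $u_3A_m$ by $\Phi_m + u_3\Psi_m$ yields
\[
u_3 = \Phi_m(1 + A_m) + u_3 \Psi_m\bigl(A_m + u_3\tilde\Psi_m\bigr).
\]
I would then set $\Phi_{2m} := \Phi_m(1+A_m)$ and $\Psi_{2m} := \Psi_m(A_m + u_3\tilde\Psi_m)$; since $A_m + u_3\tilde\Psi_m$ has valuation at least $m$ and $\Psi_m$ has valuation at least $m$, the product $\Psi_{2m}$ has valuation at least $2m$. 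Iterating $\lceil\log_2 m\rceil$ times reaches any desired $m$.

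The main obstacle is keeping $\Phi_m$ free of $u_3$ throughout the iteration: a naive substitution would leave $\Psi_m$ (which depends on $u_3$) multiplying $\Phi_m$, contaminating it with $u_3$. The two-stage substitution above handles this by first isolating the $u_3$-independent part $A_m$ inside $\Psi_m$, and only afterward substituting in the outer $u_3$. The degree and coefficient bounds $O_m(1)$ follow by an easy induction on the number of doubling steps, since at each step one composes polynomials of bounded degree with bounded coefficients.
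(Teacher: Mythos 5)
Your construction is correct and follows essentially the same route as the paper, which simply defers to Heath-Brown's proof of Lemma 2 in \cite{Heath-Brown09}: a Taylor expansion of $w$ at $(a_1,a_2,a_3)$ to solve for $u_3$ using $|F_3|\gg 1$ as the base case, followed by iterated self-substitution of the identity into $\Psi_m$ to double its valuation. Your two-stage substitution isolating $A_m=\Psi_m(u_1,u_2,\Phi_m,w)$ before replacing the outer $u_3$ is just a slightly more explicit bookkeeping of that same inductive step, and your valuation counts ($A_m$ of valuation $\geq m$, $\tilde\Psi_m$ of valuation $\geq m-1$, hence $\Psi_{2m}$ of valuation $\geq 2m$) check out.
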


In other words, the lemma states that the relation
\[
F(a_1+u_1,a_2+u_2,a_3+u_3) = F(a_1,a_2,a_3)+w
\]
defines $u_3$, approximately, as a function of $u_1$, $u_2$ and $w$. It may thus be viewed as a form of the Implicit function theorem.

\section{Application of the determinant method}
\label{sec:auxiliary}

Let $F \in \ZZ[x_1,x_2,x_3,x_4]$ be a non-singular form of degree $k \geq 3$, $N$ a positive integer, and $B \geq 1$ a real number. Our aim in this section is to exhibit a set $\mathcal C$ of homogeneous polynomials $A_i \in \ZZ[x_1,x_2,x_3,x_4]$ of the same degree $\delta$, such that every solution $\xx \in \ZZ^4 \cap [-B,B]^4$ to the inequality
\begin{equation}
\label{eq:F_leq_N}
|F(x_1,x_2,x_3,x_4)| \leq N
\end{equation}
satisfies at least one of the equations $A_i(\xx)=0$. These polynomials shall be called auxiliary forms. Let $\cA(F,N,B,\delta)$ be the smallest possible cardinality of such a collection $\cC$ of auxiliary forms. This is a well-defined quantity since there are only finitely many solutions to \eqref{eq:F_leq_N}. Our arguments will conclude in two different estimates for $\cA(F,N,B,\delta)$. However, we begin with some considerations that apply to both situations, following closely the arguments in \cite{Heath-Brown09}.

Since we are only interested in the order of growth of $\cA(F,N,B,\delta)$ as a function of $B$, it is clearly enough to consider solutions to \eqref{eq:F_leq_N} for which
\[
|x_4| \geq \max(|x_1|,|x_2|,|x_3|).
\]
We may even restrict ourselves to solutions satisfying
\begin{equation}
\label{eq:hollowbox}
B/2 < \max_i(|x_i|) = |x_4| \leq B,
\end{equation}
deducing the final estimate from this case by dyadic summation.

If we assume that
\begin{equation}
\label{eq:M_leq}
M \leq (B/2)^k M_0^{-1} N^{-1},
\end{equation}
then every solution $\xx$ to \eqref{eq:F_leq_N} and \eqref{eq:hollowbox}
produces a good point
\[
\ttt = (x_1/x_4,x_2/x_4,x_3/x_4),
\]
which by Lemma \ref{lem:goodcubes} lies in some good cube. Thus, let
\[
S=[a_1,a_1+(M_0 M)^{-1}]\times [a_2,a_2+(M_0 M)^{-1}] \times [a_3,a_3+(M_0 M)^{-1}]
\]
be a good cube, and let $R=\{\xx^{(j)},1\leq j\leq J\}$ be the set of $\xx\in\ZZ^4$ satisfying \eqref{eq:F_leq_N} and \eqref{eq:hollowbox}, and such that $\ttt \in S$.
For now, let $\delta$ be any positive integer, and let $s=\binom{\delta+3}{3}$ be the number of different monomials in $x_1,x_2,x_3,x_4$ of degree $\delta$.
Consider the $s \times J$-matrix
\[
\cA = \big( f_i(\xx^{(j)}) \big),
\]
where $f_i$ runs over all monomials of degree $\delta$. We shall prove that it is possible to choose $M$ in such a way that $\rank \cA < s$. This implies the existence of a homogeneous polynomial $A(x_1,x_2,x_3,x_4)$ of degree $\delta$ vanishing at all the $\xx^{(j)}$.

If $J<s$, we are done. Otherwise, we proceed by choosing a subset of $R$ of cardinality $s$, without loss of generality $\{\xx^{(1)},\dotsc,\xx^{(s)}\}$, and evaluating the corresponding $s\times s$-subdeterminant
\[
\Delta_1 = \det \big( f_i(\xx^{(j)}) \big)_{1\leq i,j \leq s}.
\]
Our aim is to prove that $|\Delta_1| < 1$. In that case, being an integer, $\Delta_1$ has to vanish.
We have
\begin{equation}
\label{eq:Delta1}
\Delta_1 = \prod_{j=1}^s (x^{(j)}_4)^\delta \Delta_2 \ll B^{s\delta}|\Delta_2|,
\end{equation}
where $\Delta_2 =  \det \big( f_i(t^{(j)}_1,t^{(j)}_2,t^{(j)}_3,1) \big)$.

At this point, we make the "variable change" suggested by Lemma \ref{lem:implicit}. Suppose, without loss of generality, that $\left\vert \partial F/\partial x_3(a_1,a_2,a_3,1) \right\vert \gg 1$. For $(t_1,t_2,t_3) \in S$, let $u_1,u_2,u_3,w$ be as in Lemma \ref{lem:implicit}. Furthermore, let $\xi = F(t_1,t_2,t_3,1) = w+ F(a_1,a_2,a_3,1)$. For any monomial $f_i$, we have 
\[
f_i(t_1,t_2,t_3,1) = G_i(u_1,u_2,u_3),
\]
where $G_i=G_{i,S}$ is a polynomial of degree $\delta$. For any $m \in \NN$, an application of Lemma \ref{lem:implicit} now yields
\begin{align*}
f_i(t_1,t_2,t_3,1) &= G_i(u_1,u_2,\Phi_m(u_1,u_2,w) + u_3 \Psi_m(u_1,u_2,u_3,w))\\
&= Q_i(u_1,u_2,w) + u_3 H_i(u_1,u_2,u_3,w),
\end{align*}
where $Q_i = Q_{i,S}$ and $H_i = H_{i,S}$ are polynomials of degree $O_m(1)$ and all terms of $H_i$ have degree at least $m$.

Now, if $\ttt$ is a good point, we have
$u_i \ll M^{-1}$ and $\xi \ll M^{-1}$, and thus also $w \ll M^{-1}$, so we get
\begin{align*}
f_i(t_1,t_2,t_3,1) &= g_i(u_1,u_2,\xi) + O_m(M^{-(m+1)}),
\end{align*}
for some polynomials $g_i$ of degree $O_m(1)$. Here, $g_i$ depends on the chosen cube $S$, but the size of the coefficients of $g_i$ is bounded in terms of $m$. We conclude that
\begin{equation}
\label{eq:Delta2}
\Delta_2 = \Delta_3 + O_m(M^{-(m+1)}),
\end{equation}
where $\Delta_3 =  \det \big( g_i(u^{(j)}_1,u^{(j)}_2,\xi^{(j)}) \big)$.

To estimate the determinant $\Delta_3$, we shall use a variant of the argument in \cite{Bombieri-Pila} where we take into account the fact that one of the variables, $\xi$, takes only small values. Let us recall the notation from \cite{Heath-Brown09}: let $n,D,H$ be positive integers. Given real numbers $0 \leq X_1,\dotsc,X_n \leq 1$, we define the size $\Vert m_i \Vert$ of a monomial $m_i(x_1,\dotsc,x_n) = x_1^{\alpha_1}\dotsb x_n^{\alpha_n}$ by
\[
\Vert m_i \Vert = X_1^{\alpha_1}\dotsb X_n^{\alpha_n}.
\]
Furthermore, we enumerate the monomials $m_1,m_2,\dotsc$ in $x_1,\dotsc,x_n$ in such a way that $\Vert m_1 \Vert \geq \Vert m_2 \Vert \geq \dotsb$. Finally, by abuse of notation, by the height $\Vert f \Vert$ of a polynomial $f \in \CC[x_1,\dotsc,x_n]$ we mean the maximum modulus of its coefficients. Heath-Brown proves the following result.

\begin{lemma}[{\cite[Lemma 3]{Heath-Brown09}}]
\label{lem:vandermonde}
Let $f_1,\ldots,f_H \in \CC[x_1,\ldots,x_n]$ be polynomials of degree at most $D$. Let $\xx^{(1)},\dotsc,\xx^{(H)} \in \CC^n$ satisfy $|x^{(j)}_i| \leq X_i$ for all $i$ and $j$. Then we have the estimate
\[
\det(f_i(\xx^{(j)}))_{1\leq i,j \leq H} \ll_{H,D} (\max_i \Vert f_j \Vert)^H \prod_{j=1}^H \Vert m_i \Vert.
\]
\end{lemma}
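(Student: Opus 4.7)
My plan is to mimic the Bombieri--Pila determinant expansion, exploiting the fact that the $f_i$'s live in the finite-dimensional space of polynomials of degree $\leq D$ in $n$ variables.

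First I would enumerate all monomials of degree at most $D$ as $m_1, m_2, \dotsc, m_s$ (where $s = \binom{D+n}{n}$, a quantity depending only on $D$ and $n$) in decreasing order of their sizes $\Vert m_i \Vert$, and write each input polynomial as
\[
f_i(\xx) = \sum_{k=1}^{s} c_{i,k} m_k(\xx), \qquad |c_{i,k}| \leq \Vert f_i \Vert.
\]
Then the $H\times H$ matrix $(f_i(\xx^{(j)}))$ factors as $C\cdot V$, where $C = (c_{i,k})$ is $H\times s$ and $V = (m_k(\xx^{(j)}))$ is $s\times H$. Since $s$ depends only on $D$ and $n$, I may regard $s$ as an $O_{D,n}(1)$ quantity.

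Next I would apply the Cauchy--Binet formula:
\[
\det\bigl(f_i(\xx^{(j)})\bigr) = \sum_{\sigma} \det(C_{\sigma}) \det(V_{\sigma}),
\]
where $\sigma$ ranges over $H$-element subsets of $\{1,\dotsc,s\}$, $C_\sigma$ is the $H\times H$ minor of $C$ on columns $\sigma$, and $V_\sigma$ the minor of $V$ on rows $\sigma$. For each $\sigma$, the Leibniz expansion gives the trivial bounds
\[
|\det(C_\sigma)| \leq H!\,(\max_i \Vert f_i \Vert)^{H}, \qquad |\det(V_\sigma)| \leq H! \prod_{k \in \sigma} \max_{j} |m_k(\xx^{(j)})| \leq H!\prod_{k \in \sigma} \Vert m_k \Vert.
\]
Because the monomials have been ordered with $\Vert m_1 \Vert \geq \Vert m_2 \Vert \geq \dotsb$, any product $\prod_{k\in\sigma} \Vert m_k \Vert$ over an $H$-subset is maximized by the choice $\sigma = \{1,\dotsc,H\}$, so
\[
\prod_{k\in \sigma} \Vert m_k \Vert \;\leq\; \prod_{j=1}^{H} \Vert m_j \Vert.
\]

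Combining these inequalities and summing over the $\binom{s}{H} = O_{D,n,H}(1)$ subsets $\sigma$ gives
\[
\bigl|\det(f_i(\xx^{(j)}))\bigr| \;\ll_{H,D,n}\; (\max_i \Vert f_i \Vert)^{H} \prod_{j=1}^{H} \Vert m_j \Vert,
\]
which is the asserted estimate (with $n$ absorbed into the dependence on $D$ since $s \leq (D+1)^n$, and $n$ is fixed throughout the paper's application). There is no real obstacle here: the content is essentially just Cauchy--Binet together with the observation that ordering the monomials by size turns the sum of minor-products into an estimate controlled by the top $H$ monomials; the only thing to watch is being generous enough with the constants so that $s$, $H!$, and $\binom{s}{H}$ can all be swept into the implied constant.
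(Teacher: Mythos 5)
Your argument is correct, and it is essentially the standard proof of this lemma: the paper itself does not reprove it but cites Heath-Brown, whose original argument is the same multilinear expansion of the determinant into monomial minors (equivalently, your Cauchy--Binet factorization $C\cdot V$), bounded via the decreasing ordering of the $\Vert m_i\Vert$. The only points worth noting are harmless: if $H$ exceeds the number of monomials of degree at most $D$ the determinant vanishes and the bound is trivial, and your restriction of the monomial enumeration to degree $\leq D$ only strengthens the stated inequality, since the paper's $m_1,\dotsc,m_H$ are the largest monomials overall.
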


In the application of Lemma \ref{lem:vandermonde}, we take $X_1 = X_2 = (M_0 M)^{-1}$ and $X_3 = N (B/2)^{-k}$, according to our a priori bounds for $|u_1|$, $|u_2|$ and $|\xi|$, respectively.

Let the monomials $m_i(u_1,u_2,\xi)$ be defined as above. The strategy of our method is to ensure that for small $i$, $m_i$ does not contain a positive power of $\xi$. In that way our determinant will behave almost as if we were considering points on a projective surface instead of points on an affine threefold. Our approach now differs from that in \cite{Heath-Brown09} in that we allow $\delta$ to grow as large as required to obtain an optimal bound, whereas Heath-Brown only considers $\delta < k$. Thus, suppose that
\begin{equation}
\label{eq:alpha}
(M_0 M)^\alpha = N^{-1} (B/2)^k,
\end{equation}
where $\alpha$ is to be chosen properly. Then Lemma \ref{lem:vandermonde} yields an estimate
\[
\Delta_3 \ll_{\delta,m} \prod_{i=1}^s \Vert m_i \Vert \ll_{\delta,m} M^{-f} ,
\]
where $f = \sum (n_1+n_2+\alpha n_3)$, the sum being taken over all non-negative integers $n_1,n_2,n_3$ such that $u_1^{n_1} u_2^{n_2} \xi^{n_3}$ occurs among the monomials $m_i$, $1\leq i \leq s$.

To estimate $f$, we need to determine the first monomials $m_1,\dotsc,m_s$. Thus, suppose that $m_s(u_1,u_2,\xi) = u_1^{e_1} u_2^{e_2} \xi^{e_3}$, and let
\[
\nu = e_1 + e_2 + \alpha e_3,
\]
so that $\Vert m_s \Vert = (M_0 M)^\nu$.
To determine the relationship between $\delta$ and $\nu$, we note that
\begin{equation}
\label{eq:s_squeeze}
\sum_{\substack{n_1,n_2,n_3 \geq 0\\
n_1+n_2+\alpha n_3 \leq \nu -1}} 1
< s \leq
\sum_{\substack{n_1,n_2,n_3 \geq 0\\
n_1+n_2+\alpha n_3 \leq \nu}} 1.
\end{equation}
The left sum in \eqref{eq:s_squeeze}, i.e. the number of integer points inside the tetrahedron $T_1 \subset \RR^3$ defined by the inequalities
\[
x\geq 0,\ y\geq 0,\ z \geq 0,\ x + y + \alpha z \leq \nu -1,
\]
can be interpreted as the volume of the three-dimensional body
\begin{gather*}
S_1 = \bigcup_{\substack{n_1,n_2,n_3 \geq 0\\
n_1+n_2+\alpha n_3 \leq \nu -1}}
[n_1,n_1+1]\times[n_2,n_2+1]\times[n_3,n_3+1].
\end{gather*}
Since $T_1 \subset S_1 \subset T_2$, where $T_2$ is the tetrahedron
\[
x\geq 0,\ y\geq 0,\ z \geq 0,\ x-1 + y-1 + \alpha (z-1) \leq \nu -1,
\]
we get the estimate
\begin{equation}
\label{eq:vol(S_1)}
\frac{(\nu - 1)^3}{6 \alpha} = \vol(T_1) < \#(T_1 \cap \ZZ^3) < \vol(T_2) = \frac{(\nu+1+\alpha)^3}{6 \alpha}.
\end{equation}

Similarly, the right sum in \eqref{eq:s_squeeze} is the number of integer points inside the tetrahedron $T_3$ defined by the inequalities
\[
x\geq 0,\ y\geq 0,\ z \geq 0,\ x-1 + y-1 + \alpha (z-1) \leq \nu,
\]
so that,
\[
\sum_{\substack{n_1,n_2,n_3 \geq 0\\
n_1+n_2+\alpha n_3 \leq \nu}} 1
\leq \vol(T_3) = \frac{(\nu+2+\alpha)^3}{6\alpha}.
\]
By the definition of $s$ we conclude that
\begin{equation}
\label{eq:delta}
\delta = \frac{\nu}{\alpha^{1/3}} + O_{\alpha}(\nu^{2/3}).
\end{equation}

A lower bound for $f$ is given by the sum
\[
\tilde{f} = \sum_{(n_1,n_2,n_3) \in T_1\cap\ZZ^3} (n_1 + n_2 + \alpha n_3).
\]
We can estimate $\tilde{f}$ from below by considering the integral
\[
I = \int_{T_1} (x+y+\alpha z)\, dx\,dy\,dz = \frac{(\nu-1)^4}{8\alpha}.
\]
Since $T_1 \subset S_1$, we have
\begin{align*}
I &< \sum_{(n_1,n_2,n_3) \in T_1\cap\ZZ^3} (n_1+1+n_2+1+\alpha(n_3+1))\\
  &= \tilde f + (2+\alpha) \#(T_1 \cap \ZZ^3).
\end{align*}
By \eqref{eq:vol(S_1)} we conclude that
\begin{equation}
\label{eq:f}
f > \frac{(\nu-1)^4}{8\alpha} - \frac{(2+\alpha)(\nu+1+\alpha)^3}{6\alpha} = \frac{\nu^4}{8\alpha} + O_\alpha(\nu^3).
\end{equation}

Tracing our steps back to the estimates \eqref{eq:Delta1} and \eqref{eq:Delta2}, and choosing $m = f = O_{a,\delta}(1)$, we get the estimate
\[
\Delta_1 \ll_{F,\delta,\alpha} B^{s \delta} (M_0 M)^{-f} \ll_{F,\delta,\alpha} B^\beta,
\]
where, upon recalling the relation \eqref{eq:alpha}, we have
\begin{align*}
\beta &= s \delta - \frac{f}{\alpha}\left(k-\frac{\log N}{\log B}\right).
\end{align*}
Using \eqref{eq:s_squeeze}, \eqref{eq:delta} and \eqref{eq:f} this implies that
\begin{equation}
\label{eq:beta}
\beta = \left( \frac{1}{6} - \left(k-\frac{\log N}{\log B}\right) \frac{1}{8\alpha^{2/3}} \right) \delta^4 + O_\alpha(\delta^3)
\end{equation}

Let us first consider the case where $N$ remains fixed as $B \to \infty$. Given $\epsi > 0$, choose $\lambda = \lambda(k,\epsi) > 0$ small enough that
\[
\frac{16}{3\sqrt{3k}} (1- \lambda)^{-1} \leq \frac{16}{3\sqrt{3k}} + \epsi
\quad \text{and}\quad
(1-\lambda)\left(\frac{3 k}{4} \right)^{3/2} > 1,
\]
and put
\[
\alpha = (1 - \lambda) \left(\frac{3 k}{4} \right)^{3/2},
\]
so that
\[
\frac{2k}{\alpha} = \frac{16}{3\sqrt{3k}} (1- \lambda)^{-1} \leq \frac{16}{3\sqrt{3k}} + \epsi.
\]

Then there is a positive constant $c_1=c_1(k,\epsi)$ such that
\[
\frac{1}{6} - \left(k-\frac{\log N}{\log B}\right) \frac{1}{8\alpha^{2/3}} < -c_1,
\]
that is, the leading coefficient in \eqref{eq:beta} is negative, as soon as $B \geq B_0 = B_0(N,k,\epsi)$. Thus, we have $\beta < -1$, say, as soon as $\delta \geq \delta_0 = \delta_0(k,\epsi)$.

Assume now that $B \geq B_0$ and, in addition, that 
\[
M = M_0^{-1} N^{-1/\alpha} (B/2)^{k/\alpha}
\]
is an integer (we may clearly restrict ourselves to such values of $B$). As $\alpha > 1$, the requirement \eqref{eq:M_leq} is fulfilled, so the above arguments are indeed valid. In particular, for $\delta \geq \delta_0$ we get
\[
\Delta_1 \ll_{F,\epsi} B^{-1},
\]
so that $|\Delta_1| < 1$ as soon as $B \gg_{F,\epsi} 1$.
In this situation, as already explained, we obtain an auxiliary form of degree $\delta$ for each good cube $S$. By Lemma \ref{lem:goodcubes}, the total number of good cubes is
\[
O_F(M^2) = O_{F,N}(B^{2k/\alpha}) = O_{F,N}(B^{16/{3\sqrt{3k}}+\epsi}),
\]
and so this constitutes an upper estimate for $\cA(F,N,B,\delta)$. Thus we can summarize our findings so far in the following result:

\begin{prop}
\label{prop:auxiliaryform}
Let $F \in \ZZ[x_1,x_2,x_3,x_4]$ be a non-singular homogeneous polynomial of degree $k\geq 3$. Let $N \in \NN$ be  given. Then for any $\epsi > 0$, there is an integer $\delta$, depending only on $k$ and $\epsi$, such that
\[
\cA(F,N,B,\delta) = O_{N,\epsi}(B^{16/(3\sqrt{3k}) + \epsi}).
\]
\end{prop}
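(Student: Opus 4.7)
The plan is to assemble the pieces developed in this section into the claimed estimate, with the only remaining freedom being the choice of the scaling parameter $\alpha$ and the degree $\delta$.

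First I would reduce to solutions $\xx$ with $B/2 < |x_4| = \max_i |x_i| \leq B$, handling the other orderings by symmetry and the full range $|x_4| \leq B$ by dyadic summation. Each such $\xx$ produces a good point $\ttt = (x_1/x_4, x_2/x_4, x_3/x_4) \in [-1,1]^3$, provided that $M$ satisfies the bound \eqref{eq:M_leq}. By Lemma \ref{lem:goodcubes} there are $O_F(M^2)$ good cubes, and it suffices to produce, for every good cube $S$, a single non-zero homogeneous form $A_S$ of degree $\delta$ vanishing on every $\xx$ with $\ttt \in S$. The number of auxiliary forms required is then $O_F(M^2)$.

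For a fixed good cube $S$, I would extract an $s \times s$ subdeterminant $\Delta_1$ of the matrix $(f_i(\xx^{(j)}))$ and aim to show $|\Delta_1| < 1$, which forces $\Delta_1 = 0$ and hence the existence of $A_S$. The calculation already set up in the text factors $\Delta_1 = \prod (x_4^{(j)})^\delta \Delta_2$ with $\Delta_2$ expressed in the new coordinates $(u_1, u_2, \xi)$ of Lemma \ref{lem:implicit}, and then approximates $\Delta_2$ by $\Delta_3 = \det(g_i(u_1^{(j)}, u_2^{(j)}, \xi^{(j)}))$ up to an error $O_m(M^{-(m+1)})$ that can be absorbed by taking $m$ large. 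The determinant $\Delta_3$ is then bounded via Lemma \ref{lem:vandermonde} with $X_1 = X_2 = (M_0 M)^{-1}$ and $X_3 = N(B/2)^{-k}$, yielding $\Delta_3 \ll_{\delta,m} M^{-f}$ for the exponent $f$ defined as a sum over the $s$ smallest monomials of $(u_1, u_2, \xi)$.

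The heart of the argument is the optimization of the scaling $\alpha$ fixed by $(M_0 M)^\alpha = N^{-1}(B/2)^k$, which treats the three coordinates asymmetrically: $\xi$ is much smaller than $u_1$ and $u_2$, so powers of $\xi$ contribute disproportionately to $f$. I would carry out the lattice-point count inside the tetrahedron $x + y + \alpha z \leq \nu$ via volume approximation, obtaining $\delta = \nu/\alpha^{1/3} + O_\alpha(\nu^{2/3})$ and $f = \nu^4/(8\alpha) + O_\alpha(\nu^3)$, which combine to give the key asymptotic
\[
\beta = \left(\tfrac{1}{6} - \bigl(k - \tfrac{\log N}{\log B}\bigr)\tfrac{1}{8\alpha^{2/3}}\right)\delta^4 + O_\alpha(\delta^3)
\]
for the exponent of $B$ in the bound $\Delta_1 \ll B^\beta$. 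Minimizing $2k/\alpha$ subject to the leading coefficient in \eqref{eq:beta} being negative as $B \to \infty$ leads to the choice $\alpha = (1-\lambda)(3k/4)^{3/2}$, which gives $2k/\alpha \leq 16/(3\sqrt{3k}) + \epsi$ for $\lambda$ small enough.

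With this choice, for $B \geq B_0(N,k,\epsi)$ the coefficient of $\delta^4$ in \eqref{eq:beta} is negative, so picking $\delta = \delta_0(k,\epsi)$ large forces $\beta < -1$ and hence $|\Delta_1| < 1$ for $B$ large. The number of auxiliary forms is then bounded by the number of good cubes, $O_F(M^2) = O_{F,N}(B^{2k/\alpha}) = O_{N,\epsi}(B^{16/(3\sqrt{3k})+\epsi})$. The main obstacle is keeping the lattice-point estimates \eqref{eq:vol(S_1)} and the bound for $\tilde f$ clean enough that the leading terms in $\delta$ and $f$ are computed exactly, since it is the ratio of these leading terms that determines the exponent; everything else is handled by absorbing error terms into the arbitrary $\epsi$.
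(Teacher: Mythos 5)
Your proposal is correct and follows essentially the same route as the paper: the same reduction to a dyadic box, the same good-cube decomposition, the same determinant factorization $\Delta_1 \to \Delta_2 \to \Delta_3$, the same weighted monomial ordering with $X_3 = N(B/2)^{-k}$, the same tetrahedron volume computation leading to $\beta$, and the same choice $\alpha = (1-\lambda)(3k/4)^{3/2}$. The only details you gloss over are the paper's explicit verification that \eqref{eq:M_leq} holds because $\alpha > 1$ and the harmless restriction to $B$ for which $M$ is an integer, neither of which affects the argument.
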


Next, let us allow $N$ to vary as $B$ grows. In this situation, we shall prove the following estimate.
\begin{prop}
\label{prop:auxiliaryformbigN}
Let $F \in \ZZ[x_1,x_2,x_3,x_4]$ be a non-singular homogeneous polynomial of degree $k\geq 3$. Let $N \in \NN$ be  given such that $N=O(B^{k-\tau})$, where $4/3 < \tau < k$. Then for any $\epsi > 0$, there is an integer $\delta$, depending only on $k$,$\tau$ and $\epsi$, such that
\[
\cA(F,N,B,\delta) = O_{\tau,\epsi}\left(B^{\frac{16}{3\sqrt{3k}} + \epsi} N^{\frac{24}{(3\tau)^{3/2}}-\frac{16}{(3k)^{3/2}}}\right).
\]
\end{prop}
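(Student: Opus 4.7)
The plan is to mirror the proof of Proposition~\ref{prop:auxiliaryform}, but to let the exponent $\alpha$ depend on $N$ and $B$, and then convert the resulting bound into the stated form by an elementary inequality.

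First I would set $\kappa = k - \log N/\log B$. The hypothesis $N = O(B^{k-\tau})$ gives $\kappa \geq \tau - o(1)$ as $B \to \infty$, so $\kappa$ lies in a bounded interval depending only on $k$ and $\tau$. For a sufficiently small $\lambda = \lambda(\epsi) > 0$ I would take $\alpha = (1-\lambda)(3\kappa/4)^{3/2}$; the leading coefficient of $\beta$ in \eqref{eq:beta} is then $(1 - (1-\lambda)^{-2/3})/6$, a negative constant depending only on $\lambda$. Because $\alpha$ stays bounded, the implied constants in the $O_\alpha(\delta^3)$ remainder are uniform in $N$ and $B$, and a single $\delta = \delta_0(k,\tau,\epsi)$ may be chosen to force $\beta \leq -1$ over the whole admissible range, yielding an auxiliary form in each good cube. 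The number of good cubes is then
\[
O(M^2) \ll B^{2k/\alpha} N^{-2/\alpha} = (B^k/N)^{2/\alpha} = B^{2\kappa/\alpha} = B^{16/(3(1-\lambda)\sqrt{3\kappa})},
\]
which is at most $B^{16/(3\sqrt{3\kappa})+\epsi/2}$ for $\lambda$ small enough.

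To complete the proof I would convert this to the stated form. Since $(k-\kappa)\log B = \log N$, it suffices to prove
\[
\frac{16}{3\sqrt{3\kappa}} \leq \frac{16}{3\sqrt{3k}} + \Bigl(\frac{24}{(3\tau)^{3/2}} - \frac{16}{(3k)^{3/2}}\Bigr)(k-\kappa) \qquad (\tau \leq \kappa \leq k).
\]
Writing $\sqrt{k}-\sqrt{\kappa} = (k-\kappa)/(\sqrt{k}+\sqrt{\kappa})$ and dividing through by $k-\kappa$, this reduces to the inequality $16/(\sqrt{k\kappa}(\sqrt{k}+\sqrt{\kappa})) \leq 24/\tau^{3/2} - 16/k^{3/2}$; since the left side is decreasing in $\kappa$, it suffices to verify it at $\kappa = \tau$. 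The substitution $u = \sqrt{\tau/k}$ then reduces the problem to checking the one-variable polynomial inequality $3 + 3u - 2u^2 - 2u^3 - 2u^4 \geq 0$ on $(0, 1]$, which holds with equality only at $u = 1$.

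The main obstacle is the conversion step: the adaptive determinant argument directly produces $\cA \ll B^{16/(3\sqrt{3\kappa})+\epsi}$, a bound that is uniformly tighter than---but not of the form---the stated $B^{16/(3\sqrt{3k})+\epsi} N^{e}$. The relating inequality is tight at $\tau = k$ (consistent with the reduction to Proposition~\ref{prop:auxiliaryform} when $N$ is fixed), so the polynomial verification above, though elementary, is essential. A minor technicality is ensuring that $\delta$ may be chosen uniformly as $\alpha$ varies with $N$ and $B$; this is handled by the boundedness of the range of $\kappa$.
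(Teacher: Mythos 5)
Your proposal is correct, and its core is the same as the paper's: you rerun the determinant-method argument of Section~\ref{sec:auxiliary} with $\alpha$ adapted to the actual size of $N$ relative to $B$ (your $\kappa = k-\log N/\log B$ plays the role of the paper's parameter $t$, which the paper discretizes over $[\tau,k)$ together with dyadic ranges $\tfrac12 B^{\gamma}<N\le B^{\gamma}$), and the computation of the leading coefficient $\tfrac16\bigl(1-(1-\lambda)^{-2/3}\bigr)$ and of $M^2=B^{2\kappa/\alpha}$ matches. Where you genuinely diverge is the endgame: the paper linearizes $1/\alpha$ in $\gamma=k-t$ via a mean-value/Taylor bound $\bigl(1+\gamma/t\bigr)^{3/2}\le 1+\tfrac{3k^{1/2}}{2t^{3/2}}\gamma$ and then distributes the exponent between $B^{2k/\alpha}$ and $N^{-2/\alpha}$ separately, whereas you first obtain the cleaner intrinsic bound $B^{16/(3\sqrt{3\kappa})+\epsi}$ and then prove the comparison inequality globally, reducing it to $3+3u-2u^2-2u^3-2u^4\ge 0$ on $(0,1]$; this quartic indeed factors as $(1-u)(2u^3+4u^2+6u+3)$, so your verification is sound, and your route has the small advantage of exhibiting the sharper bound $B^{16/(3\sqrt{3\kappa})+\epsi}$ as an intermediate result while the paper's linearization avoids any polynomial case-check. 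Two points you should make explicit: first, the requirement \eqref{eq:M_leq} forces $\alpha\ge 1$, which is where the hypothesis $\tau>4/3$ enters and which constrains $\lambda$ to depend on $\tau$ as well as $\epsi$ (so $\lambda=\lambda(\epsi,\tau)$, not just $\lambda(\epsi)$); second, since $N=O(B^{k-\tau})$ only gives $\kappa\ge\tau-o(1)$, you need either the strictness of your inequality for $\tau<k$ or the spare $\epsi/2$ to absorb the defect for large $B$ --- you flag this, and it is indeed harmless.
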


To prove Proposition \ref{prop:auxiliaryformbigN}, let $t$ be an arbitrary real number in the interval $\tau \leq t < k$, and put
\begin{equation}
\label{eq:alphadef}
\alpha = (1 - \lambda) \left(\frac{3t}{4}\right)^{3/2},
\end{equation}
where $\lambda > 0$ is to be chosen properly in terms of $k$, $\epsi$ and $\tau$. Put $\gamma = k - t$. Suppose now that $\frac{1}{2}B^\gamma < N \leq B^\gamma$, say, and that $M = M_0^{-1} N^{-1/\alpha} (B/2)^{k/\alpha}$ is an integer. It clearly suffices to prove our estimate for such $N$ and $B$, as $t$ runs over the interval $[\tau,k)$, allowing for an implicit constant $O_{k,\epsi,\tau}(1)$ in our final estimate.
Now we have
\[
\frac{1}{6} - \left(k-\frac{\log N}{\log B}\right) \frac{1}{8\alpha^{2/3}} \leq \frac{1}{6} \left( 1 - (1-\lambda)^{-2/3} \right) \leq -c_2,
\]
where $c_2 = c_2(\lambda) > 0$, so that
\[
\beta \leq -c_2 \delta^4 + O_{k}(\delta^3).
\]
As above, this implies that $|\Delta_1| < 1$ as soon as $\delta \gg_{k,\lambda} 1$ and $B \gg_{F,\lambda} 1$, in which case
\begin{equation}
\label{eq:A}
\cA(F,N,B,\delta) \ll_F M^2.
\end{equation}

We shall now see how to choose $\lambda$ to obtain the optimal estimate. From \eqref{eq:alphadef} we have
\[
\frac{1}{\alpha} =  \left(\frac{4}{3t}\right)^{3/2} (1-\lambda)^{-1} = \left(\frac{4}{3k}\right)^{3/2} (1-\lambda)^{-1}\left(1-\frac{\gamma}{k}\right)^{-3/2}.
\]
By Taylor expansion of the function $x \mapsto (1+x/t)^{3/2}$ at $x=0$, one sees that
\[
\left(1-\frac{\gamma}{k}\right)^{-3/2} = \left(1+\frac{\gamma}{t}\right)^{3/2} \leq 1 + \frac{3k^{1/2}}{2 t^{3/2}} \gamma.
\]
Furthermore, we may assume that $\lambda < 1/2$, so that $(1 -\lambda)^{-1} < 1 + 2\lambda$. Thus, for any $\epsi > 0$, we have
\[
\frac{8}{3^{3/2}k^{3/2}} < \frac{1}{\alpha} < \frac{8}{3^{3/2}k^{3/2}} + \frac{\epsi}{2k} + \gamma \frac{4}{3^{1/2}\tau^{3/2}k},
\]
upon choosing $\lambda \ll_{k,\epsi,\tau} 1$. (Note that $\lambda$ does not depend on $\gamma$.) Using this estimate and the fact that $\gamma \leq \log (2N)/\log B$, we get
\[
B^{2k/\alpha} \ll B^{\frac{16}{3\sqrt{3k}} + \epsi} N^{\frac{24}{(3\tau)^{3/2}}},
\]
and hence
\[
M^2 < B^{2k/\alpha} N^{-2/\alpha} 
\ll B^{\frac{16}{3\sqrt{3k}} + \epsi} N^{\frac{24}{(3\tau)^{3/2}}-\frac{16}{(3k)^{3/2}}}.
\]
In view of \eqref{eq:A}, this establishes the estimate in Proposition \ref{prop:auxiliaryformbigN}.


\section{Curves of low degree on Fermat threefolds}
\label{sec:fermat}

The following result was proven by Salberger:

\begin{thm}[{\cite[Thm. 8.1]{Salberger09}}]
\label{thm:fermat}
Let $K$ be an algebraically closed field of characteristic $0$, and let $(a_0,\dotsc,a_n)$ be an $(n+1)$-tuple of non-zero elements of $K$. Let $X \subset \PP^n_K$ be the Fermat hypersurface given by $a_0 x_0^k + \dotsb + a_n x_n^k=0$. Suppose that $C \subset X$ is an integral curve of degree $e$ that does not lie on any other hypersurface defined by a diagonal form $b_0 x_0^k + \dotsb b_n x_n^k$. Then the following holds:
\[
(n+1)(k-(n-1)) \leq n d + \frac{n(n-1)(e-3)}{2}.
\]
\end{thm}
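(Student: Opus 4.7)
The plan is to reduce to a smooth model by passing to the normalization $\pi : \tilde C \to C$, and then to set up a comparison between two linear systems on $\tilde C$: one coming from the coordinate functions $x_0, \ldots, x_n \in H^0(\tilde C, L)$ where $L := \pi^* \cO_{\PP^n}(1)|_C$ has degree $e$, and one coming from their $k$-th powers. The hypothesis that $C$ does not lie on any other diagonal hypersurface of degree $k$ says exactly that $x_0^k, \ldots, x_n^k$ span an $n$-dimensional subspace $V \subseteq H^0(\tilde C, L^{\otimes k})$, yielding a morphism $\phi : \tilde C \to \PP^{n-1}$ whose image is a non-degenerate curve of degree at most $ke$.

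From here the main idea is to play two estimates against each other. On one side, a Castelnuovo-type genus bound applied to $\phi(\tilde C) \subset \PP^{n-1}$ controls the arithmetic genus of $\tilde C$ from above in terms of $e$, $k$ and $n$, producing a term roughly of the shape $\binom{n-1}{2} e$. On the other side, an adjunction calculation for the Fermat hypersurface, using $K_X = \cO_X(k-n-1)$ and Riemann--Hurwitz for $\tilde C \to C \subset X$, yields a lower bound on $2g(\tilde C) - 2$ in terms of $e(k-n-1)$ corrected by ramification and singularity data. Combining the two inequalities and clearing denominators should produce the stated bound $(n+1)(k-n+1) \leq n e + \tfrac{n(n-1)(e-3)}{2}$.

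The step I expect to be hardest is matching the constants exactly, particularly the coefficient $\binom{n}{2}(e-3)$ on the right-hand side. Castelnuovo's bound on a non-degenerate curve of degree $e$ in $\PP^{n-1}$ has leading coefficient $\binom{n-1}{2}$, not $\binom{n}{2}$, so the correct auxiliary linear system is probably not $V$ itself but a one-step enlargement such as the image of the multiplication map $V \otimes H^0(\tilde C, L) \to H^0(\tilde C, L^{\otimes (k+1)})$, which lands in $\PP^n$ and restores the Castelnuovo coefficient $\binom{n}{2}$. A secondary difficulty is controlling base points and the degree of $\phi$ (a priori $\phi$ could fail to be birational, or $|V|$ could acquire base points along the vanishing loci $\{x_i = 0\} \cap \tilde C$), both of which would weaken the effective degree used in Castelnuovo's inequality. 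Finally, I would need a separate, simpler argument for the degenerate case in which some coordinate function $x_i$ vanishes identically on $\tilde C$: then the hypothesis forces $C$ to satisfy an additional diagonal relation, contradicting the assumption, so such curves are excluded automatically.
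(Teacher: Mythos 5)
First, a point of reference: the paper does not prove this statement at all --- it is quoted verbatim from Salberger \cite[Thm.~8.1]{Salberger09}, so there is no in-paper proof to compare against. Note also that the ``$d$'' on the right-hand side is Salberger's symbol for the degree of the Fermat form, i.e.\ $d=k$ in the present notation, not the degree $e$ of the curve as you assumed. This matters for your goal of ``matching the constants'': with $d=k$ the inequality rearranges to $e\geq 3+\frac{2(k-n^2+1)}{n(n-1)}$, which for $n=3$ and $n=4$ gives exactly the thresholds $(k+1)/3$ and $(k+3)/6$ used in Theorem \ref{thm:salberger8.4} and Proposition \ref{prop:fermat}; with your reading $d=e$ these thresholds do not come out.

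Your opening moves (normalization, and the observation that the hypothesis forces $V=\langle x_0^k,\dotsc,x_n^k\rangle$ to have dimension exactly $n$) are correct and are indeed where the proof starts, but both halves of the ``two estimates played against each other'' fail. (i) There is no adjunction/Riemann--Hurwitz lower bound on $g(\tilde C)$ of the shape $e(k-n-1)$: for $n\geq 4$ the curve has codimension $\geq 2$ in $X$ and adjunction says nothing without knowing the normal bundle, while for $n=3$ it gives $2g_a-2=C^2+e(k-4)$ with $C^2$ unbounded below. Morally no such bound can exist, since $X$ does contain rational curves of low degree (the standard lines); they are excluded only by the linear-independence hypothesis, which your genus argument never uses. (ii) Castelnuovo applied to $\phi(\tilde C)\subset\PP^{n-1}$, a nondegenerate curve of degree about $ke$, bounds $g$ by roughly $(ke)^2/(2(n-2))$ --- quadratic in $ke$, not ``roughly $\binom{n-1}{2}e$'' --- and in any case an upper bound on $g$ cannot be played against a (nonexistent) lower bound.

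The factor $k$ on the left-hand side does not come from the genus of $C$; it comes from the fact that the spanning sections of $V$ are $k$-th powers and hence vanish to order $\geq k$ at every zero of the corresponding coordinate. The argument that produces exactly the stated constants is the Brownawell--Masser/Voloch unit-equation (Wronskian) argument: the Wronskian $W$ of a basis of $V$ is a nonzero section of $L^{\otimes nk}\otimes K_{\tilde C}^{\otimes n(n-1)/2}$, so $\deg\operatorname{div}(W)=nke+\binom{n}{2}(2g-2)$; at a point where $\operatorname{ord}_P(x_i)=m\geq 1$ the vanishing sequence of $V$ contains $km$, forcing $\operatorname{ord}_P(W)\geq km-(n-1)\geq m(k-n+1)$, and summing over the $(n+1)e$ zeros (with multiplicity) of the coordinates --- with a little care at points where several coordinates vanish --- gives $\deg\operatorname{div}(W)\geq (n+1)e(k-n+1)$. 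Combining with the elementary genus bound $2g-2\leq e(e-3)$ for an integral degree-$e$ space curve and dividing by $e$ yields $(n+1)(k-n+1)\leq nk+\binom{n}{2}(e-3)$, which is the assertion. Your proposal contains no step that exploits this forced $k$-fold vanishing, and that is the essential gap.
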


From this, Salberger draws the following conclusion:

\begin{thm}[{\cite[Thm. 8.4]{Salberger09}}]
\label{thm:salberger8.4}
Let $K$ be an algebraically closed field of characteristic $0$. Let $(a_0,a_1,a_2,a_3)$ be a quadruple of non-zero elements of $K$ and $X \subset \PP^3_K$ the Fermat hypersurface given by $a_0 x_0^k + a_1 x_1^k + a_2 x_2^k + a_3 x_3^k=0$, where $k \geq 3$. Let $C \subset X$ be an integral curve. If
\[
\deg C < (k+1)/3,
\]
then $C$ is one of the $3 k^2$ lines defined on $X$ by the equation $a_0 x_0^k + a_j x_j^k = 0$, where $j=1,2$ or $3$. 
\end{thm}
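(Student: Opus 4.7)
The plan is to combine Theorem~\ref{thm:fermat} in the case $n=3$ with a linear-algebraic analysis of the space of diagonal forms vanishing on $C$. Let $V$ denote the four-dimensional $K$-vector space of degree-$k$ diagonal forms on $\PP^3_K$, identified with their coefficient vectors, and let $V_C \subseteq V$ be the subspace of forms vanishing identically on $C$. The defining form of $X$ certainly lies in $V_C$. A first application of Theorem~\ref{thm:fermat} to $C$ shows that if $C$ were to lie on no diagonal hypersurface other than $X$, the resulting lower bound would force $\deg C \geq (k+1)/3$, contradicting the hypothesis. Hence $\dim V_C \geq 2$.

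Since no coordinate of $(a_0,a_1,a_2,a_3)$ vanishes, no coordinate functional $b_j$ vanishes on all of $V_C$, so $V_{C,j} := V_C \cap \{b_j = 0\}$ has dimension $\dim V_C - 1 \geq 1$ for each $j$. Pick any non-zero $h \in V_{C,0}$; after relabelling I may assume $h \in K[x_1,x_2,x_3]$. The proof now proceeds by case analysis on the number of variables actually appearing in $h$, showing in each case either that $C$ is one of the $3k^2$ listed lines, or that $\deg C \geq k > (k+1)/3$, contradicting the hypothesis.

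If $h$ has all three coefficients non-zero, then $\{h=0\}\subset\PP^3$ is the irreducible cone with vertex $P=[1:0:0:0]$ over the smooth degree-$k$ Fermat plane curve $\{h=0\}\subset\PP^2$. Since $a_0\neq 0$, no line through $P$ is contained in $X$ (the coefficient of $s^k$ along any such parametrisation equals $a_0$), so the projection away from $P$ restricts to a non-constant morphism on $C$; its image is then an irreducible subcurve of the smooth Fermat plane curve, hence equals it, forcing $\deg C \geq k$. If $h$ has only one non-zero coefficient, then $C$ lies in a coordinate hyperplane, where $X$ cuts out another smooth Fermat curve of degree $k$, so again $\deg C = k$. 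The remaining case is $h = c x_i^k + d x_{i'}^k$ with $\{i,i'\}\subset\{1,2,3\}$ and $cd\neq 0$; then $\{h=0\}$ splits as $k$ hyperplanes, and $C$ lies on one such $H=\{x_i=\eta x_{i'}\}$ with $\eta^k = -d/c$. Restricting $X$ to $H$ yields $a_0 x_0^k + (a_i \eta^k + a_{i'}) x_{i'}^k + a_j x_j^k = 0$, where $\{j\} = \{1,2,3\}\setminus\{i,i'\}$. If $a_i \eta^k + a_{i'} \neq 0$, this is again a smooth degree-$k$ Fermat plane curve, so $\deg C = k$ and we are done by contradiction. Otherwise $a_i \eta^k + a_{i'} = 0$, equivalently $h$ is proportional to $a_i x_i^k + a_{i'} x_{i'}^k$, and the restriction degenerates to $a_0 x_0^k + a_j x_j^k = 0$, which factors into $k$ lines in $H$. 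Each such line satisfies both $a_0 x_0^k + a_j x_j^k = 0$ and $a_i x_i^k + a_{i'} x_{i'}^k = 0$, and therefore lies among the $k^2$ special lines corresponding to the index $j$. Letting the pair $\{i,i'\}$ range over the three size-two subsets of $\{1,2,3\}$ recovers all $3k^2$ lines.

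The main technical obstacle is the 2-variable subcase, in which one must verify that the degeneracy $a_i\eta^k + a_{i'} = 0$ is genuinely the only possibility compatible with $\deg C < k$ and produces precisely the special lines listed. The irreducibility and smoothness of the Fermat plane curve with three non-zero coefficients (over an algebraically closed field of characteristic zero), invoked repeatedly above, is classical and I would cite rather than reprove.
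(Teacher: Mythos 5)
Your argument is correct, but note that the paper itself gives no proof of this statement: it is imported verbatim as Salberger's Theorem 8.4. The natural comparison is with the paper's proof of Proposition \ref{prop:fermat} (the $\PP^4$ analogue), and your proof is essentially that argument one dimension lower: use Theorem \ref{thm:fermat} to produce a second, linearly independent diagonal form vanishing on $C$, normalize it to kill the $x_0$-coefficient, and then run a case analysis via coordinate projections, exploiting that a plane Fermat curve with three non-zero coefficients is smooth and irreducible in characteristic $0$. The steps all check out (in particular $P=[1:0:0:0]\notin X$ since $a_0\neq 0$, so the projection from $P$ is finite on $C$ and $\deg C\geq k$ in the non-degenerate cases, while the degenerate case of your two-variable analysis yields exactly the $3k^2$ standard lines), so this is a faithful reconstruction of Salberger's proof rather than a genuinely different route.
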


We shall now derive an analogous statement for Fermat threefolds in $\PP^4$. Let $K$ be an algebraically closed field of characteristic $0$. Let $(a_0,\dotsc,a_4)$ be a quintuple of non-zero elements of $K$ and $X \subset \PP^4_K$ the Fermat hypersurface given by $a_0 x_0^k + \dotsb + a_4 x_4^k=0$, where $k \geq 4$. For any partition
\[
\{0,1,2,3,4\} = \{i_0,i_1\} \cup \{i_2,i_3,i_4\},
\]
the subvariety of $X$ defined by 
\[
a_{i_0}x_{i_0}^k + a_{i_1} x_{i_1}^k = a_{i_2}x_{i_2}^k + a_{i_3} x_{i_3}^k +a_{i_4} x_{i_4}^k = 0
\]
is covered by a one-dimensional family of lines, called \emph{standard lines} \cite[\S 2]{Debarre}. It is well-known \cite[Ex. 2.5.3]{Debarre} that all lines contained in $X$ are standard if $k\geq 4$. The following result strengthens that statement to say that all curves on $X$ of sufficiently low degree are in fact standard lines.

\begin{prop}
\label{prop:fermat}
Let $C \subset X$ be an integral curve. If
\[
\deg C < (k+3)/6,
\]
then $C$ is a standard line. 
\end{prop}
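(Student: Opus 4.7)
The plan is to follow Salberger's proof strategy for Theorem~\ref{thm:salberger8.4} adapted one dimension higher: use Theorem~\ref{thm:fermat} (with $n=4$) to force $C$ onto a second diagonal hypersurface besides $X$, then reduce to the Fermat surface case handled by Theorem~\ref{thm:salberger8.4}. Writing $e = \deg C$ and $F = a_0 x_0^k + \dots + a_4 x_4^k$ for the Fermat form defining $X$, Theorem~\ref{thm:fermat} with $n=4$ (where the relevant ``$d$'' is the degree $k$ of $X$) yields $5(k-3) \leq 4k + 6(e-3)$, i.e.\ $e \geq (k+3)/6$, under the assumption that $C$ lies on no diagonal hypersurface other than $X$. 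Hence the hypothesis $e < (k+3)/6$ forces $C$ to lie on some second diagonal form $G = \sum b_i x_i^k$ whose coefficient vector is not proportional to $(a_0, \dots, a_4)$.

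Taking a suitable linear combination of $F$ and $G$, I would obtain a form $G' = c_1 x_1^k + c_2 x_2^k + c_3 x_3^k + c_4 x_4^k$ with vanishing $x_0$-coefficient still containing $C$. Generically all $c_i \neq 0$, and then $V(G')$ is a cone over a Fermat surface $V(G')_0 \subset \PP^3$ with apex $P = (1:0:0:0:0)$. Projecting $C$ from $P$ to $V(G')_0$ produces an image $\pi(C)$ which cannot be a point: otherwise $C$ would be a line through $P$ lying on $X$, which forces $a_0 = 0$, contradicting $a_0 \neq 0$. So $\pi(C)$ is a curve of degree at most $e < (k+1)/3$, and Theorem~\ref{thm:salberger8.4} identifies it as a standard line $\{c_{i_0} x_{i_0}^k + c_{i_1} x_{i_1}^k = 0\}$ of $V(G')_0$ for some pair $\{i_0, i_1\} \subset \{1,2,3,4\}$.

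To recover $C$, observe that the preimage $\pi^{-1}(\pi(C))$ is a 2-plane in $\PP^4$ through $P$; in suitable coordinates $(x_0, y_s, y_t)$ on that plane, with $(y_s : y_t)$ parameterizing $\pi(C)$, the restriction $X \cap \pi^{-1}(\pi(C))$ becomes a plane Fermat curve $a_0 x_0^k + \gamma_1 y_s^k + \gamma_2 y_t^k = 0$. Were all three coefficients non-zero, this would be a smooth irreducible curve of genus $(k-1)(k-2)/2 \geq 1$, ruling out $C$ as a proper sub-curve. Hence $\gamma_1 = 0$ (say); the plane curve splits as $k$ lines $\{x_0 = \zeta y_t\}$, and $C$ must be one of them. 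The identity $\gamma_1 = 0$ is equivalent to the relation $a_{i_0} x_{i_0}^k + a_{i_1} x_{i_1}^k = 0$ holding on $C$; subtracting this from the equation of $X$ yields $a_0 x_0^k + a_{i_2} x_{i_2}^k + a_{i_3} x_{i_3}^k = 0$ on $C$ for the complementary pair $\{i_2, i_3\} \subset \{1,2,3,4\}$, exhibiting $C$ as the standard line corresponding to the partition $\{i_0, i_1\} \cup \{0, i_2, i_3\}$.

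The main obstacle is the bookkeeping for the degenerate cases in which several of $c_0, \dots, c_4$ vanish simultaneously, so $V(G')_0$ is a Fermat variety in three or fewer variables. In those cases $\pi(C)$ would have to lie in a Fermat curve in $\PP^2$, which for $k \geq 3$ is irreducible of positive genus and of degree $k > e$; this forces $\pi(C)$ to be a point, and $C$ to lie in a plane section of $X$ that once again factors as a union of $k$ lines. A case analysis tied to the vanishing pattern of the $c_i$ then identifies the appropriate partition $\{i_0, i_1\} \cup \{i_2, i_3, i_4\}$ of $\{0,1,2,3,4\}$ exhibiting $C$ as a standard line.
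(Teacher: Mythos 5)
Your argument is correct and follows essentially the same route as the paper's proof: Theorem \ref{thm:fermat} supplies a second, independent diagonal form through $C$, a linear combination plus a coordinate projection reduce matters to Theorem \ref{thm:salberger8.4}, and the irreducibility of a plane Fermat curve of degree $k$ then forces $C$ to be a (standard) line. The paper organizes the reduction slightly differently -- it first extracts a \emph{ternary} diagonal form with all coefficients non-zero vanishing on $C$ and then projects to $\PP^2$ -- and its treatment of the degenerate coefficient patterns is about as brief as yours, so the bookkeeping you defer (including the case where the combined form has only two non-zero coefficients, which your ``Fermat curve in $\PP^2$'' remark does not literally cover but which reduces at once to a hyperplane section and Theorem \ref{thm:salberger8.4}) is of the same routine nature.
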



\begin{proof}
We shall first prove the following statement
\begin{itemize}
\item[(I)]
There exists a three-element subset $\{i_0,i_1,i_2\}$ of $\{0,1,2,3,4\}$ and a diagonal form $c_{0} x_{i_0}^k + c_{1} x_{i_1}^k + c_{2} x_{i_2}^k$ that vanishes on $C$, with all $c_i \neq 0$.
\end{itemize}
By Theorem \ref{thm:fermat} there is a diagonal form $b_0 x_0^k + \dotsb + b_4 x_4^k$, linearly independent from $a_0 x_0^k + \dotsb + a_4 x_4^k$, that vanishes on $C$. Choosing a suitable linear combination of the two forms, we can assume that either one or two of the coefficients $b_i$ vanish. If there are exactly three non-zero coefficients we are done, so let us assume that there are four. By permuting the variables, we assume for the sake of simplicity that $b_4=0$ and $b_i \neq 0$ for $i < 4$.

Next, let $\pi: \PP^4 \dashrightarrow \PP^3$ be the rational map given by projection onto the first four coordinates. Let $Y \subset \PP^3$ be the hypersurface given by $b_0 x_0^k + \dotsb + b_3 x_3^k=0$. Then the image $\pi(C)$ is an irreducible curve $C' \subset Y$. Indeed, the image is either a point or a curve, but the first alternative would imply that $C$ were a line containing the point $(0:0:0:0:1)$, which would contradict the fact that $a_4 \neq 0$. Furthermore we have $\deg C' \leq (k+3)/6 \leq (k+1)/3$, so by Theorem \ref{thm:salberger8.4}, $C'$ is a standard line. In other words, there is a partition $\{0,1,2,3\} = \{j_0,j_1\} \cup \{j_2,j_3\}$ such that $b_{j_0} x_{j_0}^k + b_{j_1} x_{j_1}^k = b_{j_2} x_{j_2}^k + b_{j_3} x_{j_3}^k = 0$ on $C$. Choosing a suitable linear combination of the forms $a_0 x_0^k + \cdots + a_4 x_4^k$, $b_{j_0} x_{j_0}^k + b_{j_1} x_{j_1}^k$ and $b_{j_2} x_{j_2}^k + b_{j_3} x_{j_3}^k$, we get (I).

%
%

Having proven (I), we may assume, by permuting the variables, that the form $c_0 x_0^k + c_1 x_1^k+c_2 x_2^k$ vanishes on $C$. We shall prove that $C$ is a line. To this end, let $\pi_1:\PP^4 \dashrightarrow \PP^2$ be the projection onto the first three coordinates. Let $Z \subset \PP^2$ be the subvariety given by $c_0 x_0^k + c_1 x_1^k+c_2 x_2^k=0$. As above, $\pi_1(C)$ is either a point or an irreducible curve contained in $Z$. But this curve would have degree less than $(k+3)/6$, which would contradict the fact that $Z$ is an irreducible curve of degree $k$. Therefore $\pi_1(C)$ is a single point, say $(y_0:y_1:1)$ without loss of generality.

This means that $C$ is contained in the plane $\Pi_1 \subset \PP^4$ given by the equations $x_0 - y_0 x_2 = x_1 - y_1 x_2 = 0$. Inserting this into the equation for $X$, we infer that
\[
a_2' x_2^k + a_3 x_3^k + a_4 x_4^k = 0
\]
on $C$, where $a_2' = a_0 y_0^k + a_1 y_1^k + a_2$. If $a_2' = 0$, we infer that $C$ is one of the $k$ lines given by the equations
\[
a_3 x_3^k + a_4 x_4^k = x_0 - y_0 x_2 = x_1 - y_1 x_2 = 0.
\]
If $a_2' \neq 0$, then by the same argument as above, $C$ is mapped to a point by the projection $\pi_2: \PP^4 \dashrightarrow \PP^2$ onto the last three coordinates, which implies that $C$ is contained in some plane $\Pi_2 \subset \PP^4$, necessarily distinct from $\Pi_1$. $C$ is then the line $\Pi_1 \cap \Pi_2$. It would now be easy to proceed by showing that $C$ is one of the standard lines, but as remarked above, this is a known result.
\end{proof}


\section{Counting integral points on affine surfaces}
\label{sec:integralpoints}

From now on we consider the case of a diagonal form. Thus, let
\[
F(x_1,x_2,x_3,x_4) = a_1 x_1^k + a_2 x_2^k + a_3 x_3^k + a_4 x_4^k,
\]
where $a_i$ are non-zero integers, let $N$ be a positive integer and $B\geq 1$ a real number. Furthermore, let $X \subset \AA^4$ be the hypersurface defined by
\[
F(x_1,x_2,x_3,x_4) = N.
\]
Let $V_i \subset \AA^4$, for $1 \leq i \leq 4$, be the closed subvariety defined by
\[
a_i x_i^k = N, \quad \sum_{j \neq i} a_j x_j^k = 0
\]
and $W_{i,j}$, for $1 \leq i < j \leq 4$, be defined by
\[
a_i x_i^k + a_j x_j^k = N, \quad \sum_{\ell \neq i,j} a_\ell x_\ell^k = 0.
\]
As is shown in Section \ref{sec:fermat}, the algebraic set
\[
V = \left( \bigcup_{1\leq i\leq 4} V_i \right) \cup \left( \bigcup_{1\leq i < j \leq 4} W_{i,j} \right)
\]
is precisely the union of all lines on $X$. The quantity we wish to estimate is then $\cR_0(N,B) = \cN(X_0,B)$, where $X_0 := X \setminus V$.

By Proposition \ref{prop:auxiliaryform} we know that every $\xx \in X(\ZZ,B)$ satisfies
\begin{equation}
\label{eq:subvariety}
F(x_1,x_2,x_3,x_4) = N,\quad A_i(x_1,x_2,x_3,x_4)=0,
\end{equation}
for one of $O_{N,\epsi}(B^{16/(3\sqrt{3k})+\epsi})$ forms $A_i$ of degree $O_{\epsi}(1)$.


\begin{rem*}
We shall only write out the proof of Theorem \ref{thm:main}. If we would supply the more precise estimate of Proposition \ref{prop:auxiliaryformbigN} at this point, we would obviously get a proof of Theorem \ref{thm:mainbigN}.
\end{rem*}

Let $\tilde Y \subset \AA^4$ be any one of the varieties defined by \eqref{eq:subvariety}. Since $A_i$ is homogeneous, it cannot vanish entirely on $X$, so the dimension of $\tilde Y$ is $2$. Let $Y$ be an irreducible component of $\tilde Y$. As we shall see shortly, we may assume that $Y$ is in fact geometrically irreducible. Then, as $\bar Y$ is a closed subvariety of the non-singular hypersurface $\bar X$, where $\bar X, \bar Y \subset \PP^4$ denote the respective projective closures, it follows from the Noether-Lefschetz theorem \cite[pp. 180-1]{Hartshorne70} that the degree $d$ of $\bar Y$ is divisible by $k$.

It is then possible \cite[Prop. 6.2]{Marmon10} to find an affine projection $\pi : Y \to \AA^3$ that is birational onto its image, and such that integral points of height at most $B$ are mapped onto integral points of height at most $cB$ for some constant $c \ll_k 1$. Then $W = \pi(Y) \subset \AA^3$ is an irreducible closed subvariety of dimension $2$ and degree $d$, and $\pi^{-1}(\xx)$ consists of at most $d$ points for any $\xx \in W$.

Now we use the new version of the determinant method developed by Salberger. For the sake of convenience, we recall the following result from \cite{Salberger09}.

\begin{thm}[{\cite[Thm. 7.2]{Salberger09}}]
\label{thm:Salberger7.2}
Let $X \subset \AA^3_\QQ$ be a geometrically integral surface of degree $d$. Then there is a collection of $O_{d,\epsi}(B^{1/\sqrt{d}+\epsi})$ geometrically integral curves $D_\lambda \subset X$, $\lambda \in \Lambda$, of degree $O_d(1)$, such that
\[
\cN \Big(X \setminus \bigcup_{\lambda \in \Lambda} D_\lambda, B \Big) = O_{d,\epsi}(B^{2/\sqrt{d}+\epsi}).
\]
\end{thm}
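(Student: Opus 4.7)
The plan is to invoke Salberger's global $p$-adic determinant method. Pass to the projective closure $\bar{X} \subset \PP^3_\QQ$, which is geometrically integral of degree $d$; the set $X(\ZZ,B)$ embeds in the set of rational points of height $\ll B$ on $\bar{X}$. The method aims to produce a collection of auxiliary hypersurfaces $H_\alpha \subset \PP^3$ of some bounded degree $D = D(d,\epsi) = O_d(1)$, none containing $\bar{X}$, such that every relevant rational point lies on some $H_\alpha$.

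The hypersurfaces arise as follows. For each prime $p$ in a suitable range and each non-singular $\FF_p$-point $\xi$ of the reduction of $\bar{X}$ modulo $p$, the rational points on $\bar{X}$ reducing to $\xi$ concentrate in a small $p$-adic neighborhood. Using a $p$-adic analogue of the implicit function expansion in Lemma \ref{lem:implicit}, one sets up a Bombieri--Pila style $s \times s$ determinant, with $s = \binom{D+3}{3}$, whose vanishing forces all points in the packet onto a common hypersurface of degree $D$. The hallmark of the \emph{global} method is that $p$-adic contributions are combined over many primes simultaneously, so the cumulative $p$-adic sizes of the relevant minors permit $D$ to be taken much smaller (equivalently, yield many fewer hypersurfaces) than a single-prime analysis would allow. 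Optimally balancing the parameters produces an admissible family with $|\{H_\alpha\}| = O_{d,\epsi}(B^{1/\sqrt{d}+\epsi})$.

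Each intersection $\bar{X} \cap H_\alpha$ is a curve of degree $O_d(1)$, decomposing into a bounded number of geometrically integral components. Collecting all distinct such components yields the family $\{D_\lambda\}_{\lambda \in \Lambda}$ with $|\Lambda| = O_{d,\epsi}(B^{1/\sqrt{d}+\epsi})$. For the residual estimate, a point $\xx \in X(\ZZ,B)$ that lies on $\bar X \cap H_\alpha$ but avoids every $D_\lambda$ must lie on a non-geometrically-integral component of the intersection; such points lie on curves of bounded degree over $\bar\QQ$, to which one may apply a Heath-Brown type bound for integral points of bounded height on curves, yielding a total contribution of $O_{d,\epsi}(B^{2/\sqrt{d}+\epsi})$.

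The principal obstacle is the global coordination of the $p$-adic determinant arguments needed to extract the sharp exponent $1/\sqrt{d}$ for the number of auxiliary hypersurfaces. One must choose the auxiliary degree $D$, the prime threshold $P$, and the partition of integral points by residue classes in a coordinated way that realizes the full gain from summing $p$-adic valuations across primes. A secondary but still delicate point is the bookkeeping that passes from the auxiliary hypersurfaces to distinct geometrically integral curves, since a single $D_\lambda$ may account for many $H_\alpha$, and conversely one $H_\alpha$ may contribute several distinct $D_\lambda$.
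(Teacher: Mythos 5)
The first thing to say is that the paper does not prove this statement at all: it is Salberger's Theorem 7.2, quoted verbatim ``for the sake of convenience'' and used as a black box. So there is no internal proof to compare yours against; what you have written is an attempted reconstruction of Salberger's own argument, and it has to be judged on that basis. As such it is an outline of the right \emph{method} (the global, multi-prime determinant method) but not a proof: the entire quantitative content of the theorem --- the choice of auxiliary degree $D$, the range of primes, the computation of the cumulative $p$-adic valuation of the minors, and the optimization that yields exactly $O_{d,\epsi}(B^{1/\sqrt{d}+\epsi})$ hypersurfaces --- is deferred to a paragraph that you yourself label ``the principal obstacle.'' Since the exponents $1/\sqrt{d}$ and $2/\sqrt{d}$ \emph{are} the theorem, nothing has actually been established.

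Two more specific concerns. First, your opening reduction, embedding $X(\ZZ,B)$ into the rational points of height $\ll B$ on the projective closure $\bar X\subset\PP^3$, discards precisely the structure that makes the affine exponents attainable: an integral point $(x_1,x_2,x_3)$ gives the projective point $(x_1:x_2:x_3:1)$, whose constrained last coordinate is what the affine determinant method exploits (compare the weighting of the monomial $\xi$ in Section \ref{sec:auxiliary} of this paper, or the gap between Heath-Brown's projective and affine surface theorems). A purely projective count on $\bar X$ would produce strictly weaker exponents, so the argument must be run affinely. Second, your derivation of the residual bound $O_{d,\epsi}(B^{2/\sqrt{d}+\epsi})$ is not coherent as stated: points on an integral-but-not-geometrically-integral component of $\bar X\cap H_\alpha$ lie on the intersection of its conjugate geometric components, hence contribute only $O_d(1)$ points per hypersurface, which would give a residual of order $B^{1/\sqrt{d}+\epsi}$, not $B^{2/\sqrt{d}+\epsi}$; in Salberger's theorem the $B^{2/\sqrt{d}+\epsi}$ term has a different source (it bounds the points that the determinant-method packets fail to place on any low-degree curve at all), and your sketch does not account for it.
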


From Theorem \ref{thm:Salberger7.2} we infer that there is a collection of $O_{k,\epsi}(B^{1/\sqrt{k}+\epsi})$ irreducible curves on $W$ of degree $O_{k}(1)$ such that all but $O_{k,\epsi}(B^{2/\sqrt{k}+\epsi})$ points of $W(\ZZ,B)$ lie on one of these curves. Pulling these curves and points back by $\pi$, we get $O_{k,\epsi}(B^{1/\sqrt{k}+\epsi})$ irreducible curves of bounded degree on $Y$, the union of which contains all but $O_{k,\epsi}(B^{2/\sqrt{k}+\epsi})$ points of $Y(\ZZ,B)$.

Concerning the case where $Y$ is integral but not geometrically integral, we can say more. Indeed, one can argue as in \cite[Proof of Thm. 2.1]{Salberger05} to conclude that all rational points on $Y$ lie on a single curve, the sum of the degrees of the irreducible components of which is bounded in terms of $k$. Thus these irreducible components can be absorbed in the collection of curves and points of the previous paragraph.

To investigate the nature of such a curve, we shall use Proposition \ref{prop:fermat} on the hypersurface
\[
\bar X = \{ -N x_0^k + a_1 x_1^k + a_2 x_2^k + a_3 x_3^k + a_4 x_4^k = 0 \} \subset \PP^4.
\]
Any irreducible curve on $X$ of degree less than $(k+3)/6$ gives rise to an irreducible curve of the same degree on $\bar X$, and must therefore in fact be one of the lines in $V$.

Since the number of irreducible components of a surface $\tilde Y$ as above is bounded in terms of $k$, we conclude that
\begin{equation}
\label{eq:auxiliary}
X_0(\ZZ,B) \subseteq \left(\bigcup_C C(\ZZ,B) \right) \cup \left( \bigcup_\yy \{\yy\}\right),
\end{equation}
where $C$ runs over a collection of
$$O(B^{16/(3\sqrt{3k})+1/\sqrt{k}+\epsi})$$
irreducible curves of degree at least $(k+3)/6$, and $\yy$ runs over a collection of
$$O(B^{16/(3\sqrt{3k})+2/\sqrt{k}+\epsi})$$
points.

To obtain the estimate \eqref{eq:main}, we now apply Pila's estimate \cite{Pila95}. If $C \subset \AA^4$ is an irreducible curve of degree $d$, then we have
\begin{equation}
\label{eq:pila}
\cN(C,B) \ll_{d,\epsi} B^{1/d+\epsi}.
\end{equation}
Thus we conclude that
\begin{equation*}
\label{eq:mainestimate}
\cR_0(N,B) \ll_\epsi B^{16/(3\sqrt{3k}) + 1/\sqrt{k} + 6/(k+3)+ \epsi} + B^{16/(3\sqrt{3k}) + 2/\sqrt{k} + \epsi},
\end{equation*}
which establishes the main estimate in Theorem \ref{thm:main}.

It remains to estimate the number of special solutions, using known bounds for Thue equations.

\begin{prop}
\label{prop:thue}
Let $a,b,h \in \ZZ \setminus \{0\}$, and let $k \geq 3$ be an integer. Then the number of integer solutions $(x,y)$ to the equation $a x^k + b y^k = h$ is $O(h^\epsi)$ for any $\epsi >0$, where the implied constant depends only on $k$ and $\epsi$.
\end{prop}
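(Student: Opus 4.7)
The plan is to recognize $F(X,Y) = aX^k + bY^k$ as a binary form of degree $k \geq 3$ with non-zero discriminant (since $a, b \neq 0$ and $k \geq 3$), and then invoke a uniform bound for the number of solutions to the Thue equation $F(x,y) = h$. The key input is Evertse's theorem (or, alternatively, Bombieri--Schmidt), which bounds the number of \emph{primitive} integer solutions in terms of the form's degree and the number $\omega$ of distinct prime divisors of $h$, independently of the size of the coefficients of $F$.

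First I would reduce to primitive solutions. If $(x,y)$ is an integer solution with $d = \gcd(x,y)$, then $d^k$ divides $h$, and $(x/d, y/d)$ is a coprime solution to
\[
a X^k + b Y^k = h/d^k.
\]
The number of positive integers $d$ with $d^k \mid h$ is bounded by the divisor function $d(h)$, which is $O_\epsi(h^\epsi)$. Hence it suffices to bound, for each such $d$, the number of primitive solutions to the reduced Thue equation, and sum over $d$.

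Next I would apply Evertse's uniform bound: for any binary form $G$ of degree $k \geq 3$ with non-zero discriminant and any non-zero integer $m$, the number of primitive integer solutions to $G(x,y) = m$ is at most $C(k) \cdot 7^{\omega(m)}$ for some constant $C(k)$ depending only on $k$. Applying this to $G = F$ and $m = h/d^k$ gives at most $C(k) \cdot 7^{\omega(h/d^k)} \leq C(k) \cdot 7^{\omega(h)}$ primitive solutions. Finally, from the standard estimate $7^{\omega(h)} = O_\epsi(h^\epsi)$ (which follows from $\omega(h) \leq \log h / \log\log h$ for large $h$, or more directly from $7^{\omega(h)} \leq d(h)^{\log_2 7}$), we conclude that the total number of solutions is $O_{k,\epsi}(h^\epsi) \cdot O_\epsi(h^\epsi) = O_{k,\epsi}(h^\epsi)$.

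There is essentially no obstacle in carrying this out; the entire content is the invocation of Evertse's uniform Thue bound, and the only minor point to check is that the discriminant of $aX^k + bY^k$ is non-zero (which it is: the roots of $aX^k + b = 0$ are the $k$ distinct $k$-th roots of $-b/a$ when $k \geq 3$ and $a, b \neq 0$). If one wishes to avoid citing Evertse, a slightly weaker but sufficient bound $O_{k,\epsi}(h^\epsi)$ (with the $\epsi$-dependent implicit constant) can be extracted from the classical work on Thue equations together with the gap principle for large solutions, but Evertse's result provides the cleanest reference.
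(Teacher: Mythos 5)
Your proof is correct and follows essentially the same route as the paper: both rest on Evertse's uniform bound (depending only on the degree $k$ and on $\omega$ of the right-hand side) together with the elementary estimate $\omega(h) \ll \log h/\log\log h$. The only cosmetic difference is that you reduce to primitive solutions by hand and then apply the Thue-equation version of Evertse's theorem, whereas the paper cites Evertse's Thue--Mahler estimate, which bounds all solutions directly by $C(k)^{1+\omega(h)}$.
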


\begin{proof}
More precisely, the number of solutions is at most $C^{1+\omega(h)}$, where $C$ is a constant depending only on $k$. This follows from Evertse's estimate \cite[Cor. 2]{Evertse84} for Thue-Mahler equations. Thus the proposition follows from the observation that $\omega(h) \ll \log h/\log\log h$.
\end{proof}



This result immediately implies the trivial bound 
\[
\cR(N,B) =O(B^{2+\epsi}).
\]
We shall now estimate $\cN(V_i,B)$ and $\cN(W_{i,j},B)$, where clearly it suffices to handle the case $(i,j)=(1,2)$.
Beginning with $\cN(V_1,B)$, we have at most two choices for the value of $x_1$. The number of integer triples $(x_2,x_3,x_4)$ satisfying
\begin{equation}
\label{eq:V_1}
a_2 x_2^k + a_3 x_3^k + a_4 x_4^k = 0,\quad -B \leq x_2,x_3,x_4 \leq B
\end{equation}
is $O_k(B)$. Indeed, choose $\epsi > 0$ so that $\theta := 2/k + \epsi < 1$. Then the number of \emph{primitive} solutions to \eqref{eq:V_1} is $O_k(B^\theta)$ by Heath-Brown's estimate \cite[Thm. 3]{Heath-Brown02}. Employing Möbius inversion, as in \cite[Ex. F.16]{Hindry-Silverman}, one sees that the total number of solutions is $O_k(B)$. Thus we conclude that $\cN(V_1,B) = O_k(B)$.

Next we consider $\cN(W_{1,2},B)$. Here we have $O(B)$ choices for $(x_3,x_4)$, and by Proposition \ref{prop:thue} there are $O_{k,\epsi}(N^\epsi)$ possibilities for $(x_1,x_2)$, so $\cN(W_{1,2},B) =O_{k,\epsi}(B N^\epsi)$. Thus we have established the following result. 
\begin{prop}
\label{prop:special}
Let $\cR_1(N,B)$ be the number of special solutions to \eqref{eq:fourpowers}
satisfying $\max_i |x_i| \leq B$. Then we have
\[
\cR_1(N,B) = O_{k,\epsi} (B N^\epsi).
\]
\end{prop}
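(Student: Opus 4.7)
The plan is to decompose the set of special solutions according to which coordinate or pair of coordinates already accounts for $N$, bound each stratum separately, and sum. By definition the special locus is
\[
V \;=\; \bigcup_{i=1}^{4} V_i \;\cup\; \bigcup_{1\leq i<j\leq 4} W_{i,j},
\]
and by the symmetry of the indices it suffices to estimate $\cN(V_1,B)$ and $\cN(W_{1,2},B)$.

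For $\cN(V_1,B)$, the equation $a_1 x_1^k = N$ admits at most two integer values of $x_1$, so I only need to bound the number of triples $(x_2,x_3,x_4)\in[-B,B]^3$ on the ternary diagonal equation $a_2 x_2^k + a_3 x_3^k + a_4 x_4^k = 0$. The key input is Heath-Brown's bound \cite[Thm.~3]{Heath-Brown02}, which gives $O_k(B^{2/k+\epsi})$ \emph{primitive} solutions (those with $\gcd(x_2,x_3,x_4)=1$). A standard M\"obius-type argument, summing $(B/t)^{2/k+\epsi}$ over the possible common factors $1\leq t\leq B$, converts this into a bound of $O_k(B)$ for the total count, since $\theta:=2/k+\epsi<1$ whenever $k\geq 3$ and $\epsi$ is small enough. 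Thus $\cN(V_1,B)=O_k(B)$.

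For $\cN(W_{1,2},B)$ I invert the order of counting: first there are $O(B)$ pairs $(x_3,x_4)\in[-B,B]^2$ with $a_3 x_3^k + a_4 x_4^k = 0$, because for each $x_3$ the relation $x_4^k=-(a_3/a_4)x_3^k$ leaves at most two integer choices of $x_4$. For each such pair the residual equation $a_1 x_1^k + a_2 x_2^k = N$ is a Thue equation in $(x_1,x_2)$, so Proposition \ref{prop:thue} contributes a further $O_{k,\epsi}(N^\epsi)$ solutions. Multiplying gives $\cN(W_{1,2},B)=O_{k,\epsi}(B N^\epsi)$, and summing over the $O(1)$ strata yields the claimed bound.

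The principal obstacle is really confined to the $V_1$ estimate: the Thue contribution from $W_{1,2}$ follows instantly from Proposition \ref{prop:thue}, but the ternary homogeneous equation in $V_1$ comes with an infinite family of dilations, so one cannot avoid passing through a primitive-solution count and a M\"obius summation to obtain a clean linear-in-$B$ bound. Everything else is bookkeeping over the symmetric indices.
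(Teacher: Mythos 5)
Your proposal is correct and follows essentially the same route as the paper: the same decomposition into the loci $V_i$ and $W_{i,j}$, the same use of Heath-Brown's bound for primitive solutions of the ternary diagonal equation followed by M\"obius summation over common divisors to get $O_k(B)$ for $V_1$, and the same combination of the $O(B)$ count for $(x_3,x_4)$ with the Thue-equation bound of Proposition \ref{prop:thue} for $W_{1,2}$. No gaps.
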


\section{The sum of three $k$-th powers and an $\ell$-th power}
\label{sec:wisdom}

We begin by proving Theorem \ref{thm:threepowers}. Let $Y \subset \AA^3$ be the closed subvariety defined by the equation
\[
a_1 x_1^k + a_2 x_2^k + a_3 x_3^k = M,
\]
and $Y_0 \subset Y$ the open subset defined by $a_i x_i^k \neq M$ for $i=1,2,3$. The estimate we seek to establish is then
\[
\cN(Y_0,B) = O_{k,\epsi}(B^{2/\sqrt{k}+\epsi}).
\]

By Theorem \ref{thm:Salberger7.2} there is a collection $\mathcal C$ of (geometrically integral) curves $C \subset Y$ of degree $O_{k}(1)$, such that all but $O_{k,\epsi}(B^{2/\sqrt{k} + \epsi})$ points in $Y(\ZZ,B)$ belong to one of the curves $C \in \mathcal C$, where $\#\cC = O_{k,\epsi}(B^{1/\sqrt{k}+\epsi})$. In other words, we have
\begin{equation}
\label{eq:curves+points}
\cN(Y_0,B) \leq \sum_{C \in \cC} \cN(C\cap Y_0,B) + O_{k,\epsi}(B^{2/\sqrt{k}+\epsi}).
\end{equation}

Let $\bar Y \subset \PP^3$ be the projective closure of $Y$, that is the Fermat hypersurface given by the equation
\[
-M x_0^k + a_1 x_1^k + a_2 x_2^k + a_3 x_3^k = 0.
\]
Since $\bar Y$ is smooth, it follows from a theorem of Colliot-Thélène \cite{Colliot-Thelene02} that the number of geometrically integral curves on $\bar Y$ that have degree at most $k-2$ is $O_k(1)$.

Using the results of Salberger in Section \ref{sec:fermat}, we can say more about the degrees of these curves. Indeed, unless $C\in \cC$ is one of the standard lines, in which case $C \cap Y_0 = \varnothing$, it has degree at least $(k+1)/3$.

Again we use Pila's estimate \eqref{eq:pila}. The bounded number of curves $C \in \cC$ with $\deg C \leq k-2$ thus contribute $O_{k,\epsi}(B^{3/(k+1) + \epsi})$ to \eqref{eq:curves+points}, while the curves with degree at least $k-1$ contribute $O_{k,\epsi}(B^{1/\sqrt{k} + 1/(k-1) + \epsi})$. In sum, as $k \geq 3$, we get
\begin{align*}
\cN(Y_0,B) & \ll_{k,\epsi} B^{2/\sqrt{k}+\epsi} + B^{1/\sqrt{k} + 1/(k-1) + \epsi} + B^{3/(k+1) + \epsi} \ll B^{2/\sqrt{k}+\epsi},
\end{align*}
as desired. 

Now we turn to the proof of Theorem \ref{thm:kl}. Let $X \subset \AA^4$ be the hypersurface defined by the equation \eqref{eq:lth-and-kthpowers}. We shall count integral points on hyperplane sections of $X$. Thus, for each integer $a \in [0,N^{1/\ell})$, let $X_a$ be the intersection of $X$ with the hyperplane given by $x_4 = a$. Viewed as a subvariety of $\AA^3$, $X_a$ is given by the equation
\begin{equation}
\label{eq:X_a}
x_1^k +x_2^k+x_3^k = N-a^\ell.
\end{equation}
Let $B = N^{1/k}$. It is then obvious that we have
\begin{equation}
\label{eq:slicing}
R_{k,\ell}(N) \leq \sum_{0 \leq a < N^{1/\ell}} \cN_+(X_a,B) + 1.
\end{equation}
As we are now only considering non-negative solutions to \eqref{eq:X_a}, Theorem \ref{thm:threepowers} implies that
\[
\cN_+(X_a,B) = O_{k,\epsi}(B^{2/\sqrt{k}+\epsi}) = O_{k,\epsi'}(N^{2/k^{3/2}+\epsi'}).
\]
Inserting this into \eqref{eq:slicing}, we get
\[
R_{k,\ell}(N) \ll_{k,\epsi} N^{1/\ell + 2/k^{3/2} + \epsi},
\]
which proves Theorem \ref{thm:kl}.

\bibliographystyle{plain}
\bibliography{ratpoints}

\end{document}